	\crefname{subsection}{Subsection}{Subsections}
	\Crefname{subsection}{Subsection}{Subsections}
\newcommand{\snort}{\textsc{Snort }}
\newcommand{\domineering}{\textsc{Domineering }}
\newcommand{\Snort}{\textsc{Snort}}
\newcommand{\Domineering}{\textsc{Domineering}}
\DeclareMathOperator{\ver}{vert}
\DeclareMathOperator{\obl}{obl}
\newcommand{\incomp}{\not\gtrless}
\theoremstyle{definition} \newtheorem{definition}{Definition}
\theoremstyle{plain} \newtheorem{theorem}[definition]{Theorem}
\theoremstyle{plain} \newtheorem{corollary}[definition]{Corollary}
\theoremstyle{plain} \newtheorem{proposition}[definition]{Proposition}
\theoremstyle{plain} \newtheorem{lemma}[definition]{Lemma}
\theoremstyle{plain} \newtheorem{conjecture}[definition]{Conjecture}
\theoremstyle{definition} \newtheorem{example}[definition]{Example}
\theoremstyle{remark} 
\theoremstyle{definition} 
\theoremstyle{plain} \newtheorem{fact}[definition]{Fact}
\theoremstyle{remark} 
\theoremstyle{definition} 
\DeclareRobustCommand\onedot{\futurelet\@let@token\@onedot}
\def\@onedot{\ifx\@let@token.\else.\null\fi\xspace}
\def\ie{{i.e}\onedot}
\newcommand{\Ldomino}[2]{\filldraw[fill=black!30!white] (#1+0.2,#2+0.2)--(#1+0.8,#2+0.2)--(#1+0.8,#2+1.8)--(#1+0.2,#2+1.8)--cycle;}
\newcommand{\Ltoken}[2]{\filldraw[fill=blue!30] (#1+0.5,#2+0.5) circle (0.4); \node at (#1+0.5,#2+0.5) {$L$};}
\newcommand{\Rtoken}[2]{\filldraw[fill=red!30] (#1+0.5,#2+0.5) circle (0.4);\node at (#1+0.5,#2+0.5) {$R$};}
\begin{document}
\title{Bounding Game Temperature using Confusion Intervals}
\author{Svenja Huntemann\textsuperscript{1}\textsuperscript{2}}
\address{\textsuperscript{1} School of Mathematics and Statistics\\ Carleton University\\ Ottawa, Canada}

\author{Richard J.~Nowakowski\textsuperscript{2}}
\address{\textsuperscript{2} Dept.~of Mathematics and Statistics\\
Dalhousie University\\ Halifax, Canada}

\author{Carlos Pereira dos Santos\textsuperscript{3}}
\address{\textsuperscript{3} Center for Functional Analysis, Linear Structures and Applications\\ University of Lisbon \& ISEL--IPL\\
Lisbon, Portugal}

\keywords{Combinatorial game, temperature, boiling point, Domineering, Snort.}

\thanks{The first author's research was supported by the Natural Sciences and Engineering Research Council of Canada (funding reference numbers PDF-516619-2018 and CGSD3-459150-2014) and the Killam Trust. The second author's research was supported by the Natural Sciences and Engineering Research Council of Canada (funding reference number 4139-2014). The third author is a CEAFEL member and has the support of UID/MAT/04721/2019 strategic project.}

\begin{abstract}
For a combinatorial games, temperature is a measure of the volatility, that is, by how much the advantage can change. Typically, the temperature has been measured for individual positions within specific games. In this paper, we give the first general upper bounds on the temperature for any class of games.

For a position $G$, the closure of the set of numbers $\{g\}$ such that  $G-g$ is a first player win, is called the confusion interval of $G$. Let $\ell(G)$ be the length of this interval. Our first main result is: For a class of games $\mathscr{S}$, if there are constants $J$ and $K$ such that $\ell(G^L),\ell(G^R)\leq J$ and $\ell(G)\leq K$ for for every $G\in \mathscr{S}$, 
then the temperature of every game is bounded by $K/2+J$. 
We give an example to show that this bound is tight.

Our second main result is a method to find a bound for the confusion intervals. 
In $G^L-G$ when Left gets to go first, the number of passing moves required by Right to win gives an upper bound on $\ell(G)$.

This is the first general upper bound on temperature.  
As examples of the bound and the method, we give upper bounds on the temperature of subclasses of \domineering and \Snort.
\end{abstract}

\maketitle


\section{Introduction}

Combinatorial games are two-player games of pure strategy such as \textsc{Chess} or \textsc{Go}. In this paper, we consider the temperature of combinatorial games which is a characteristic that can be used to choose a good move. However, it is difficult to calculate. One reason for this is the possibility of making large threats during game play, which increases temperature. Thus it does not monotonically decrease while playing. 

Temperature has been considered in several different contexts (see for example explicitly in \cite{Berlekamp1996,Cazenave2015,Mesdal2009,MullerES2004,NowakowskiS2007} and implicitly in \cite{Berlekamp1988,Wolfe1993}). Being able to bound temperature based on features such as board size will make analysis simpler, particularly when in a sum. However, there is no general applicable theory that can be used to approximate or bound temperature. For a class of games $S$, the boiling point $BP(S)$ is the maximum temperature of any member of $S$. In this paper, we prove an upper bound on the boiling point of a set of games based on the maximum length of the confusion intervals. This is the first known result which holds for all short games. Although this bound is still far from some conjectured bounds and computational evidence, it is optimal in the sense that there are examples of classes in which it is tight. 

In \cref{sec:background} we give all background required from combinatorial game theory for the temperature of a game and discuss a few known results. Our main results are: \cref{thm:TempBound}, which states that the temperature of a game $G$ can be bounded by the length of the confusion interval of $G$ and its options; and \cref{thm:BPbound1}, which shows that therefore the boiling point of a class of games is bounded if the length of the confusion interval of all members of the class is bounded. \cref{sec:BoilingPointBound} is dedicated to proving these two results. As examples of how to apply this new bound, we then discuss the temperatures of \domineering and \Snort. In \cref{sec:domineering} we give an upper bound on the temperature of \domineering snakes fitting within a $2\times n$ grid. And in \cref{sec:Snort} we give an upper bound on the temperature when playing \snort on a path, as well as a general conjecture stating that the temperature is at most the degree of the board one is playing on.

\section{Background}\label{sec:background}

We will begin by introducing concepts from combinatorial game theory required throughout. For more information and proofs of statements in this section, see for example ``Combinatorial Game Theory'' by Siegel \cite{Siegel2013}.

Abstractly, a combinatorial game is a directed graph where the arcs are coloured either blue or red and a token is one one of the nodes. There are two players, Left and Right. Left can move the token along a blue arc and Right along a red arc. The nodes are called \textit{positions} or (in accordance with the colloquial use of the term) games. There is perfect information so the players know the graph and which node the token occupies.  The combinatorial game is  \textit{short} if the digraph is finite and acyclic.

Although correct, there is a better description that allows induction techniques to be used. For a position $P$, let $P^\mathcal{L}$ ($P^\mathcal{R}$) be the set of nodes adjacent via a blue (red) edge. These are the set of \textit{Left options} (\textit{Right options}).  A single option will be denoted by $P^L$ or $P^R$.
A position then can be uniquely described by its options, $P=\{P^\mathcal{L}\mid P^\mathcal{L}\}$. 

Note that for any $P$, the sets $P^\mathcal{L}$ and $P^\mathcal{R}$ need not be disjoint but a game $G$ for which $P^\mathcal{L}=P^\mathcal{R}$ for all positions $P$ is called \textit{impartial}.

In this paper, we only consider short games under \textit{normal play}, meaning that the first player unable to move loses. We denote a combinatorial game by its name in \textsc{Small Caps}. Examples we will be using throughout are the following games.

\begin{definition}
In \domineering (see \cite{Berlekamp1988,LachmannMR2002}), which is played on grids, both players place dominoes. Left may only place vertically, and Right only horizontally. 

In \snort (see \cite{BerlekampCG2004}), which is played on graphs, players place a piece on a vertex which is not adjacent to a vertex containing a piece from their opponent.
\end{definition}

\begin{definition}
The \textit{disjunctive sum} $G_1+G_2$ of two games $G_1$ and $G_2$ is defined recursively as the game in which at each step the current player can decide to move in either game, but not both. Formally,
\begin{align*}
G_1+G_2=\{G_1^\mathcal{L}+G_2,G_1+G_2^\mathcal{L}\mid G_1^\mathcal{R}+G_2, G_1+G_2^\mathcal{R}\},
\end{align*}
where $G_i^\mathcal{L}+G_j=\{G_i^L+G_j:G_i^L\in G_i^\mathcal{L}\}$ and $G_i^\mathcal{R}+G_j=\{G_i^R+G_j:G_i^R\in G_i^\mathcal{R}\}$.
\end{definition}

Finding the outcome of a combinatorial game $G$ is one of the most important goals of any analysis. 

\begin{definition}[Outcome classes]\label{def:OutcomeClasses}
The \textit{outcome classes} are:
\begin{itemize}
\item $\mathscr{N} = \{G:$  the first (next) player can force a win in $G$ \};
\item $\mathscr{P}= \{G:$  the second (previous) player can force a win in $G$ \};
\item $\mathscr{L} =  \{G:$   Left can force a win, no matter who plays first in $G$ \};
\item $\mathscr{R} = \{G:$  Right can force a win, no matter who plays first in $G$ \}.
\end{itemize}
The \textit{outcome} of $G$ is  $o(G)=\mathscr{L}$, $o(G)=\mathscr{N}$, $o(G)=\mathscr{P}$, and $o(G)=\mathscr{R}$ when $G\in \mathscr{L}$, $G\in \mathscr{N}$, $G\in \mathscr{P}$, and $G\in \mathscr{R}$.
\end{definition}
A game whose outcome is in $\mathscr{N}$, that is one in which the first player always has a good move, is also called a first-player win. Similarly, games whose outcomes are in the other classes are called second-player win, Left win, or Right win, respectively.

Convention in combinatorial game theory is to order games by how favourable they are to Left. Games in $\mathscr{L}$ are the most favourable as she can always force a win. There is no differentiation between games in $\mathscr{N}$ and $\mathscr{P}$ since in both cases she in some sense wins half the time, while games in $\mathscr{R}$ are the least favourable as she always loses. Thus we have the partial order on the outcome classes as below.

\begin{center}
\begin{tikzpicture}
	\node (L) at (0,0) {$\mathscr{L}$};
	\node (N) at (-1,-1) {$\mathscr{N}$}
		edge (L);
	\node (P) at (1,-1) {$\mathscr{P}$}
		edge (L);
	\node (R) at (0,-2) {$\mathscr{R}$}
		edge (N)
		edge (P);
\end{tikzpicture}
\end{center}

\begin{example}
As examples for the outcome classes, consider the \domineering positions below, assuming normal play.
\begin{center}
\begin{tikzpicture}[scale=0.5]
	\foreach \x in {1,2,3}{
		\draw (\x,0)--(\x,2);}
	\foreach \y in {0,1,2}{
		\draw (1,\y)--(3,\y);}
	\foreach \x in {6,7}{
		\draw (\x,0)--(\x,2);
		\draw (\x+2,0)--(\x+2,1);}
	\foreach \y in {0,1}{
		\draw (6,\y)--(9,\y);}
	\draw (6,2)--(7,2);
	\foreach \y in {0,1}{
		\draw (12,\y+2)--(14,\y+2);
		\draw (12,\y)--(13,\y);}
	\foreach \x in {12,13}{
		\draw (\x,0)--(\x,3);}
	\draw (14,2)--(14,3);
	\draw (17,0)--(17,1)--(18,1)--(18,0)--(17,0);
	\node at (2,-1) {$\mathscr{N}$};
	\node at (7.5,-1) {$\mathscr{R}$};
	\node at (13,-1) {$\mathscr{L}$};
	\node at (17.5,-1) {$\mathscr{P}$};
\end{tikzpicture}
\end{center}

In the first position from the left, once either player has placed a domino, the other cannot place theirs, thus the first player to go wins. In the second position, Right going first can play in the two bottom left spaces, which leaves no moves for Left, while if Left goes first, she only has one move, leaving another move for Right. Thus Right wins this game, no matter if going first or second. The third position is similarly a Left win. In the fourth position, neither player can move. Thus no matter who goes first, they will lose, implying that the second player to go wins.
\end{example}

Given a fixed winning condition, we say two games $G_1$ and $G_2$ are \textit{equal} and write $G_1=G_2$ if $o(G_1+H)=o(G_2+H)$ for all games $H$. This relation is an equivalence relation. The equivalence class of a game $G$ under ``$=$'', often referred to by its canonical form\footnote{The game with the game tree of smallest depth and of those the one with fewest nodes. This is unique \cite{Siegel2013}}., is called its \textit{game value}.

For a game $G$, we say that the \textit{negative} $-G$ is the game recursively defined as
	\[-G=\{-G^\mathcal{R}\mid -G^\mathcal{L}\},\]
\ie the game in which the roles of Left and Right are reversed. For example, in \domineering this is equivalent to rotating the board by $90^\circ$, and in \snort this is equivalent to switching colours. We use $G-H$ as shorthand for $G+(-H)$.

Similar to equality, we can also define inequalities: We say that $G_1>G_2$ if $o(G_1+H)> o(G_2+H)$ for all games $H$, with the partial order on the outcome classes as discussed above. Two games are incomparable, denoted $G_1\incomp G_2$, if their outcome classes are incomparable, \ie if one is a first-player win and the other a second-player win. Similarly defined are $G_1\geq G_2$ and $G_1\leq G_2$.

Under normal play, we are able to determine the relationship between two games using the following fact by simply determining the outcome class of their difference.

\begin{fact}[{\cite[Section II.1]{Siegel2013}}]\label{thm:DifferenceOutcomes}
Under normal play, for two games $G$ and $H$, we have:
\begin{enumerate}
\item $G=H$ if and only if $o(G-H)=\mathscr{P}$;
\item $G<H$ if and only if $o(G-H)=\mathscr{R}$;
\item $G>H$ if and only if $o(G-H)=\mathscr{L}$; and
\item $G\incomp H$ if and only if $o(G-H)=\mathscr{N}$.
\end{enumerate}
\end{fact}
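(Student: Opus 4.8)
The plan is to reduce all four biconditionals to a single \emph{order} statement and then prove that statement by strategy combination. Observe that the four mutually exclusive relations $G=H$, $G>H$, $G<H$, $G\incomp H$ correspond exactly to the four combinations of whether $G\geq H$ and whether $G\leq H$ hold; symmetrically, the four outcome classes of $G-H$ are pinned down by whether Left can win moving second (outcome in $\{\mathscr{L},\mathscr{P}\}$) and whether Right can win moving second (outcome in $\{\mathscr{R},\mathscr{P}\}$). Since negation swaps the players (sending $\mathscr{L}\leftrightarrow\mathscr{R}$ and fixing $\mathscr{N},\mathscr{P}$) and $G\leq H$ is the same as $-G\geq -H$, it suffices to prove the single equivalence
\[
(\star)\qquad G\geq H \iff o(G-H)\in\{\mathscr{L},\mathscr{P}\},
\]
its mirror image following by applying $(\star)$ to the pair $-G,-H$. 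Intersecting the outcome sets $\{\mathscr{L},\mathscr{P}\}$ and $\{\mathscr{R},\mathscr{P}\}$ and their complements then recovers $\mathscr{P}$, $\mathscr{L}$, $\mathscr{R}$, $\mathscr{N}$ for the four cases, which is the Fact.

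Two auxiliary lemmas drive $(\star)$. The first is the mirroring (Tweedledum--Tweedledee) lemma: $o(H-H)=\mathscr{P}$ for every $H$, proved by having the second player answer any move in one component with the corresponding move in the other, which is always available because the Left options of $-H$ are the negated Right options of $H$ and vice versa; as the game is short, the responder makes the last move. The second, and the real engine, is a strategy-combination lemma: if Left wins $A$ moving second, then $o(A+Y)\geq o(Y)$ for every $Y$. To see this, whenever Left must reply she plays her second-player strategy in whichever component Right just moved in; she is always the replier in both components, so she never runs out of moves and the short game ends with Right stuck. If instead Left is to move first and wins $Y$ moving first, she opens with her winning move in $Y$ and is thereafter the replier in both components, reducing to the previous situation.

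Given these, the forward direction of $(\star)$ is immediate: specialising the hypothesis $o(G+X)\geq o(H+X)$ to the test game $X=-H$ gives $o(G-H)\geq o(H-H)=\mathscr{P}$, and the only outcomes at or above $\mathscr{P}$ in the displayed partial order are $\mathscr{L}$ and $\mathscr{P}$. For the converse, assume $o(G-H)\in\{\mathscr{L},\mathscr{P}\}$, so Left wins $A:=G-H$ moving second. Applying the strategy-combination lemma with $Y=H+X$ yields $o((G-H)+(H+X))\geq o(H+X)$. Since disjunctive sum is commutative and associative on values, $(G-H)+(H+X)$ has the same value as $(H-H)+(G+X)$, and adjoining the mirror position $H-H$ leaves outcomes unchanged: $o((H-H)+Z)\geq o(Z)$ by the lemma, while the reverse inequality follows by applying the same lemma to the (again second-player-won) game $-(H-H)$ and using that negation reverses the outcome order. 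Hence $o(G+X)\geq o(H+X)$ for every $X$, which is exactly $G\geq H$.

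The main obstacle is the strategy-combination lemma together with the accompanying bookkeeping: one must verify that Left's merged strategy never leaves her without a legal move, and that the phantom pair $H-H$ can be inserted and cancelled without altering outcomes. Everything else is either a specialisation to the single test game $X=-H$ or the purely order-theoretic matching of the four relation-combinations with the four outcome classes. Finiteness and acyclicity of short games are used only to guarantee that the combined strategies terminate with the opponent to move.
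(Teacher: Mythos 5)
The paper states this result as a Fact with a citation to Siegel's book and supplies no proof of its own, so there is no internal argument to compare against; your proposal is correct and is essentially the standard proof from the cited reference. Reducing all four biconditionals to the single equivalence $G\geq H \iff o(G-H)\in\{\mathscr{L},\mathscr{P}\}$, establishing it via the mirroring lemma $o(H-H)=\mathscr{P}$ together with the strategy-combination lemma (if Left wins $A$ moving second, then $o(A+Y)\geq o(Y)$ for all $Y$), and then intersecting the two half-conditions to recover $\mathscr{P}$, $\mathscr{L}$, $\mathscr{R}$, $\mathscr{N}$ is exactly the canonical route, and you correctly identify and handle the only delicate points: that the merged second-player strategy always supplies a response (formally an induction on birthdays), and that the phantom summand $H-H$ can be inserted and cancelled by applying the lemma to both $H-H$ and $-(H-H)$ with the outcome order reversed under negation.
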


We define $0$ to be the game $\{\emptyset\mid\emptyset\}$, so the game in which neither player has any available moves. Adding $0$ to any other games does not change it. Thus the above result in particular gives that $G\leq 0$ if and only if Right wins with Left moving first.

Many other game values beside $0$ are given names. The game $\{0\mid\emptyset\}$ is called $1$, while $\{\emptyset\mid 0\}$ is $-1$. Other integers are recursively defined as $n=\{n-1\mid\emptyset\}$ and $-n=\{\emptyset\mid -n+1\}$. A game which is a number is either an integer or a dyadic rational, for which the exact definition is not needed in this paper. The games $\{a\mid b\}$ with $a>b$ both numbers (we will only see integers) are called \textit{switches} and are often also written as $\displaystyle \frac{a+b}{2}\pm\frac{a-b}{2}$.

\begin{definition}
The \textit{Left stop} and \textit{Right stop} of a combinatorial game $G$, denoted by $LS(G)$ and $RS(G)$ respectively, are recursively defined as
\begin{align*}
LS(G)&=\begin{cases}
    	x & \text{if } G=x \text{ is a number},\\
    	\displaystyle \max_{G^L\in G^\mathcal{L}}\left\{RS\left(G^L\right)\right\} & \text{otherwise};
    \end{cases}\\
RS(G)&=\begin{cases}
    	x & \text{if } G=x \text{ is a number},\\
    	\displaystyle\min_{G^R\in G^\mathcal{R}}\left\{LS\left(G^R\right)\right\}& \text{otherwise}.
    \end{cases}
\end{align*}
\end{definition}

Note that $LS(G)\ge RS(G)$ for any game $G$. We will also use that $LS(-G)=-RS(G)$ and $RS(G)+LS(H)\leq LS(G+H)\leq LS(g)+LS(H)$. Further, for any number $x$, $G\leq x$ implies $LS(G)\leq x$; and $LS(G+x)=LS(G)+x$.

\begin{definition}
Two combinatorial games $G$ and $H$ are called \textit{infinitesimally close} if $LS(G-H)=0$ and $RS(G-H)=0$. A game infinitesimally close to 0 is called an \textit{infinitesimal}.
\end{definition}

The \textit{confusion interval} of $G$ is defined by $\mathcal{C}(G)=\{x\in\mathbb{D}: G\incomp x\}$. The endpoints of the confusion interval are the Left stop and Right stop. The measure of the confusion interval, $LS(G)-RS(G)$, is indicated by $\ell(G)$.

We can bound the measure of the confusion interval of a disjunctive sum of games based on those of the components:
\begin{lemma}\label{thm:ellDisjunctive}
For any two games $G$ and $H$ we have
\[\ell(G+H)\leq \ell(G)+\ell(H).\]
\end{lemma}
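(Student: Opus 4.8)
The plan is to reduce everything to the two subadditivity facts recorded just above the statement. Since $\ell(G+H) = LS(G+H) - RS(G+H)$, it suffices to bound $LS(G+H)$ from above and $RS(G+H)$ from below, each in terms of the stops of $G$ and $H$, and then subtract. The upper bound is immediate from the inequality $LS(G+H) \leq LS(G) + LS(H)$ that the excerpt lists among the basic properties of stops.

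For the lower bound on $RS(G+H)$, I would dualize using the identity $RS(X) = -LS(-X)$. Writing $RS(G+H) = -LS\left(-(G+H)\right) = -LS\left((-G) + (-H)\right)$ and applying the same upper bound to the sum $(-G) + (-H)$ gives $LS\left((-G)+(-H)\right) \leq LS(-G) + LS(-H) = -RS(G) - RS(H)$. Negating both sides yields $RS(G+H) \geq RS(G) + RS(H)$.

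Combining the two bounds, $\ell(G+H) = LS(G+H) - RS(G+H) \leq \left(LS(G) + LS(H)\right) - \left(RS(G) + RS(H)\right) = \ell(G) + \ell(H)$, as desired.

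The argument is essentially mechanical once the right reformulation is in hand; the only point requiring care is the passage to the lower bound for the Right stop. Rather than seeking an inequality $RS(G+H) \geq RS(G) + RS(H)$ by a fresh recursion, the cleanest route is to recognize that it is the mirror image of the Left-stop bound and to obtain it for free from $RS(X) = -LS(-X)$ together with the subadditivity of $LS$ under disjunctive sum. Note that although the excerpt states the full chain $RS(G) + LS(H) \leq LS(G+H) \leq LS(G) + LS(H)$, only the rightmost inequality is needed here, applied once to $G+H$ and once to $(-G)+(-H)$.
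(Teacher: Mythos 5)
Your proof is correct and takes essentially the same route as the paper, which likewise bounds $\ell(G+H)=LS(G+H)-RS(G+H)$ by $LS(G)+LS(H)-RS(G)-RS(H)$ using subadditivity of the stops. Your explicit derivation of $RS(G+H)\geq RS(G)+RS(H)$ via $RS(X)=-LS(-X)$ merely fills in a step the paper leaves implicit among its stated facts.
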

\begin{proof}
We have
\begin{align*}
\ell(G+H)&=LS(G+H)-RS(G+H)\\
&\leq LS(G)+LS(H)-RS(G)-RS(H)\\
&=\ell(G)+\ell(H).\qedhere
\end{align*}
\end{proof}

If all hot games were switches it would not be necessary to develop a temperature theory. In a disjunctive sum of switches, the best choice is simply one with the larger $\ell(G)$ (a kind of loves-me-loves-me-not situation). However, there are hot games such as $\{\{10\,\mid\,1\}\,\mid\,-1\}$,\textit{ containing threats}. The confusion intervals of $\{\{10\,\mid\,1\}\,\mid\,-1\}$ and $\{1\,\mid\,-1\}$ are the same, but, due to the threat, the urgency of going first in the former is larger than for the latter. Temperature theory exists because of such games (with threats); being an attempt to create a measurement system capable of estimating the ``quality of the threats'' present in the followers. Rulesets such as \snort or \domineering present switches, but few games with threats like $\{\{10\,\mid\,1\}\,\mid\,-1\}$. Still, even for these rulesets, it is not trivial to find upper bounds for the switches they contain.

We now define what it means to cool a combinatorial game, and what its temperature is.

\begin{definition}
Fix a combinatorial game $G$ and $t\ge -1$. Then \textit{$G$ cooled by $t$}, denoted $G_t$, is
\begin{itemize}
\item $G_t=n$, if $G$ is the integer $n$,
\item Set $\tilde{G}_t=\left\{G_t^\mathcal{L}-t\mid G_t^\mathcal{R}+t\right\}$, then $G_t=\tilde{G}_t$
if $G$ is not an integer and there is no $s<t$ such that $\tilde{G}_{s}$ is infinitesimally close to a number $x$;
\item $G_t=x$, if $G$ is not a number and there exists a $s<t$ such that $\tilde{G}_{s}$ is infinitesimally close to the number $x$ and $s$ is the smallest such.
\end{itemize}
\end{definition}
Note that in the last point, there is indeed a unique smallest $s$ such that $\tilde{G}_{s}$ is infinitesimally close to a number, but this is not immediate (see \cite[Section II.5]{Siegel2013} for more information).

\begin{definition}
The \textit{temperature} of a game $G$, denoted by $t(G)$ is the smallest $t\ge -1$ such that $G_t$ is infinitesimally close to a number.
\end{definition}

The temperature of a game intuitively measures the urgency of playing in it. A game with $t(G)<0$ is called \textit{cold}. This happens when a game is equal to a number, playing in it makes the situation worse, and thus the temperature is negative. A game with $t(G)=0$ is called \textit{tepid}. This happens when $LS(G)=RS(G)$ and the game $G$ is the number $LS(G)$ plus an infinitesimal. Note that there is no incentive to going first in a tepid or cold game.  Finally, a game with $t(G)>0$, a game in which there is an advantage to going first, is called \textit{hot}. 
Note that a game $G$ is hot if and only if $LS(G)>RS(G)$.

The number to which $G_{t(G)}$ is infinitesimally close to is the \textit{mean} of $G$ and indicated by $m(G)$.

The game $G_t$ for $t\leq t(G)$ can be thought of as playing $G$ but having to pay a penalty of $t$ ($-t$ for Left and $+t$ for Right) in the game and followers when making a move. Suppose that Left, going first, wins $G_t$. Left has to pay the penalty of $-t$ on her first move, and Right pays the penalty of $+t$ on his move. The two penalties cancel, and continue to do so, until Left makes her final, winning move. Thus in total Left loses $t$ moves and in the course of the game the number of moves won by Left must compensate for this loss. The temperature $t(G)$ is then the point after which the penalty is too high for either player to be interested in the game. Temperature thus gives a sense of how valuable in terms of gain a component in a disjunctive sum is to the players, or which component is the most urgent to move in.\footnote{Temperature can sometimes be misleading though. There are rare examples in which the component of highest temperature is not actually the most desirable one to move in.}
Thus a goal of computer scientists in combinatorial game theory is finding heuristics to evaluate the temperature \cite{MullerES2004,Cazenave2015}, as it points to a good, hopefully the best, move.

Given a disjunctive sum, we can bound the temperature based on the temperature of the components as in the next fact. This result will be used frequently in our calculations later on.
\begin{fact}[{\cite[Theorem II.5.18]{Siegel2013}}]
For all games $G$ and $H$ we have
\[t(G+H)\leq \max\{t(G),t(H)\}.\]
\end{fact}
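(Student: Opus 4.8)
The final statement to prove is the \emph{subadditivity of temperature over disjunctive sums}: for all games $G$ and $H$, $t(G+H)\leq \max\{t(G),t(H)\}$.

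\textbf{Plan.} The natural approach is to work directly from the definition of cooling and prove, by induction on the birthday (formal depth) of $G+H$, that if $t\ge\max\{t(G),t(H)\}$ then $(G+H)_t$ is infinitesimally close to a number; the minimality in the definition of temperature then yields $t(G+H)\le t$. The key algebraic fact I would first isolate is that cooling is \emph{additive over sums above the common temperature}, i.e.\ for $t\ge\max\{t(G),t(H)\}$ one has $(G+H)_t = G_t + H_t$. Granting this, the conclusion is almost immediate: at $t=\max\{t(G),t(H)\}$ both $G_t$ and $H_t$ are infinitesimally close to their respective means $m(G)$ and $m(H)$, so $(G+H)_t=G_t+H_t$ is infinitesimally close to the number $m(G)+m(H)$, whence $t(G+H)\le t$.

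\textbf{Key steps, in order.} First I would set up the induction on the sum of the birthdays of $G$ and $H$, handling the base cases where one or both summands are numbers (integers, or numbers at temperature $-1$) separately, since cooling treats integers specially. Second, for the inductive step with $t$ strictly larger than both component temperatures, I would compute $\widetilde{(G+H)}_t=\{(G+H)_t^{\mathcal L}-t\mid (G+H)_t^{\mathcal R}+t\}$, expand the options of $G+H$ using the disjunctive-sum rule, and apply the induction hypothesis to each option $G^L+H$, $G+H^L$, etc., rewriting $(G^L+H)_t$ as $(G^L)_t+H_t$ and so on. Third, I would argue that the resulting game $\{(G^L)_t - t + H_t,\ G_t+(H^L)_t - t \mid (G^R)_t + t + H_t,\ G_t + (H^R)_t + t\}$ equals $\tilde G_t + \tilde H_t$, and hence $G_t+H_t$, by showing the ``cross'' options (moving in the summand one is not paying the temperature for) are dominated. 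The cleanest way to see the domination is that above temperature $t(G)$ the game $G_t$ is already infinitesimally close to its mean, so moving in $G_t$ cannot help, exactly the Translation and Dominating arguments used in the standard cooling theory. Finally, I would invoke the continuity/left-over behaviour at the threshold value $t=\max\{t(G),t(H)\}$ to pass from strict inequality to equality there.

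\textbf{Main obstacle.} The hard part is the interaction between the three clauses in the definition of $G_t$, in particular the ``freezing'' clause that sets $G_t=x$ once $\tilde G_s$ first becomes infinitesimally close to a number $x$ at some smallest $s<t$. Once a component has frozen to a number, its further options disappear from the cooled sum, and one must check that the additivity identity $(G+H)_t=G_t+H_t$ survives this regime change rather than only holding in the ``generic'' range where neither component has yet boiled. Managing the case analysis—both components still hot, exactly one frozen, both frozen—while keeping track of whether the relevant infimum $s$ is attained, is where the real care is needed; the subadditivity of the stops, $RS(G)+LS(H)\le LS(G+H)\le LS(G)+LS(H)$, together with the uniqueness of the smallest freezing point noted after the cooling definition, is the tool I expect to lean on to push each case through.
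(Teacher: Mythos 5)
The paper states this result as a Fact imported from Siegel \cite[Theorem II.5.18]{Siegel2013} and gives no proof of its own, so your proposal must be measured against the standard proof, which indeed runs exactly along your architecture: linearity of cooling, then read off the temperatures. Your concluding step is fine — at $t=\max\{t(G),t(H)\}$ both $G_t$ and $H_t$ are infinitesimally close to $m(G)$ and $m(H)$, infinitesimals are closed under addition (by $RS(G)+LS(H)\le LS(G+H)\le LS(G)+LS(H)$ applied to both stops), so $(G+H)_t$ is infinitesimally close to $m(G)+m(H)$ and minimality gives $t(G+H)\le t$. The genuine gap is in your key lemma. You propose to prove additivity of cooling only \emph{above the common temperature}, i.e.\ $(G+H)_t=G_t+H_t$ for $t\ge\max\{t(G),t(H)\}$, by induction on birthday — but that induction does not close. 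In the inductive step you rewrite $(G^L+H)_t$ as $(G^L)_t+H_t$ ``by the induction hypothesis,'' yet the hypothesis for the pair $(G^L,H)$ only covers $t\ge\max\{t(G^L),t(H)\}$, and an option can be strictly hotter than the game: this is precisely the threat phenomenon the paper highlights. For $G=\{\{10\mid 1\}\mid -1\}$ one computes $t(G)=2$ while $t(G^L)=t(\{10\mid 1\})=4\tfrac{1}{2}$, so at $t=3\ge\max\{t(G),t(H)\}$ (with $H$ cold, say) your restricted statement tells you nothing about $(G^L+H)_3$, and the step fails.

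The repair is the one Siegel actually carries out: prove the \emph{unrestricted} linearity of cooling, $(G+H)_t=G_t+H_t$ for all $t\ge 0$. That statement is uniform in $t$, so the induction on options is self-contained regardless of how hot the options are, and the regime change you correctly identify as the main obstacle — one component frozen to a number, its cross options needing to be dominated — is handled once, via number translation and number avoidance, inside that proof. Your restricted claim is not false (it follows a fortiori from full linearity); the flaw is methodological: as the induction target it is too weak to support its own inductive step. With the lemma strengthened to all $t$, the rest of your proposal goes through verbatim, and it then coincides with the cited proof rather than offering a genuinely different route.
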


When cooling a game, the Left stop and Right stop become closer, until they are equal to the mean. The behaviour of the stops can be seen visually by the graphical representation of the thermograph.

\begin{definition}
Given a game $G$, the ordered pair $(LS(G_t), RS(G_t))$, regarded as a pair of functions of $t$, is called the \textit{thermograph} of $G$.
\end{definition}
The graphical representation of a thermograph uses the following conventions: $LS(G_t)$ and $RS(G_t)$ are simultaneously plotted along the horizontal axis, with positive values on the left and negative on the right, while $t$ is plotted along the vertical axis. Note that $LS(G)$ and $RS(G)$ are the points at which the thermograph crosses the horizontal axis. The two sides meet at vertical value $t(G)$ and horizontal value $m(G)$, and are then topped by an infinite vertical mast.

By definition of the Left and Right stops we can construct the thermograph of a game inductively from the thermographs of the options. The left wall is the leftmost right wall of the thermographs of the options, sheared by subtracting $t$. The right wall is similarly the rightmost left wall of the options sheared by adding $t$. The mast begins where these two intersect.

\begin{example}
Consider the game $G=\{\{5\mid 2\}\mid\{-2\mid -3\}\}$. We will construct the thermograph of $G$ inductively from its options. The thermograph of a number $n$ is simply the vertical line at $n$. Thus the thermograph of $\{5\mid 2\}$ is as below, with the line at 5 sheared clockwise, the line at 2 counterclockwise, and the mast starting at their intersection.
\begin{center}
\includegraphics[scale=0.75]{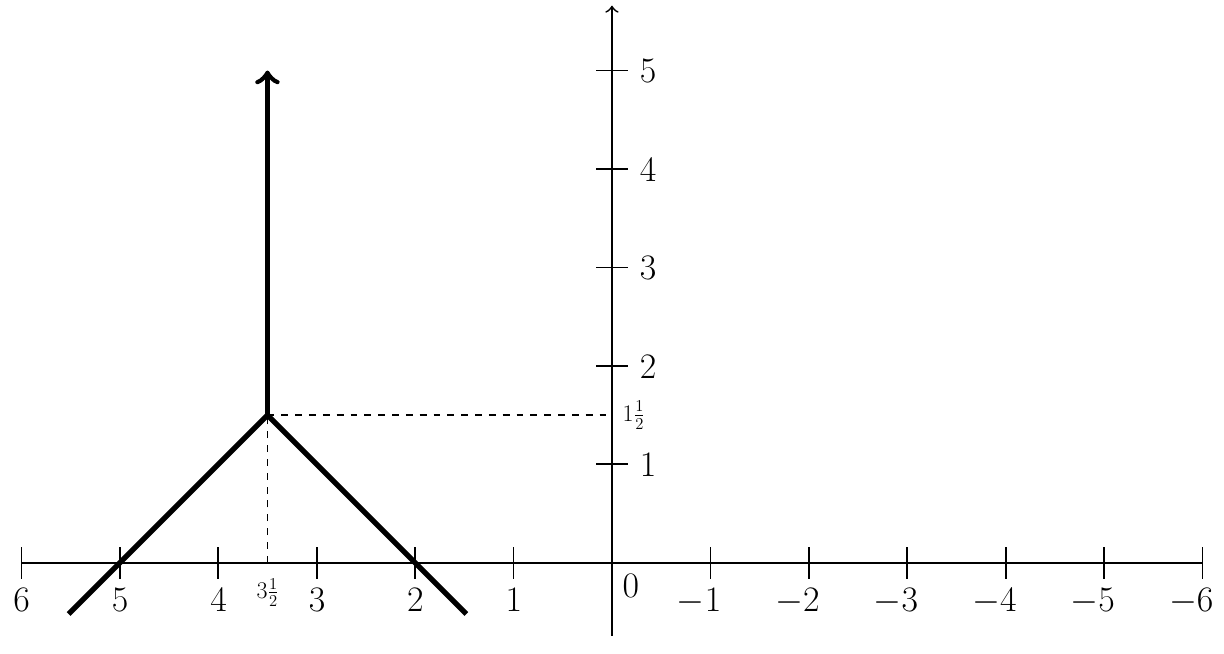}
\end{center}
The thermograph of $\{-2\mid -3\}$ is similarly given in the diagram below.
\begin{center}
\includegraphics[scale=0.75]{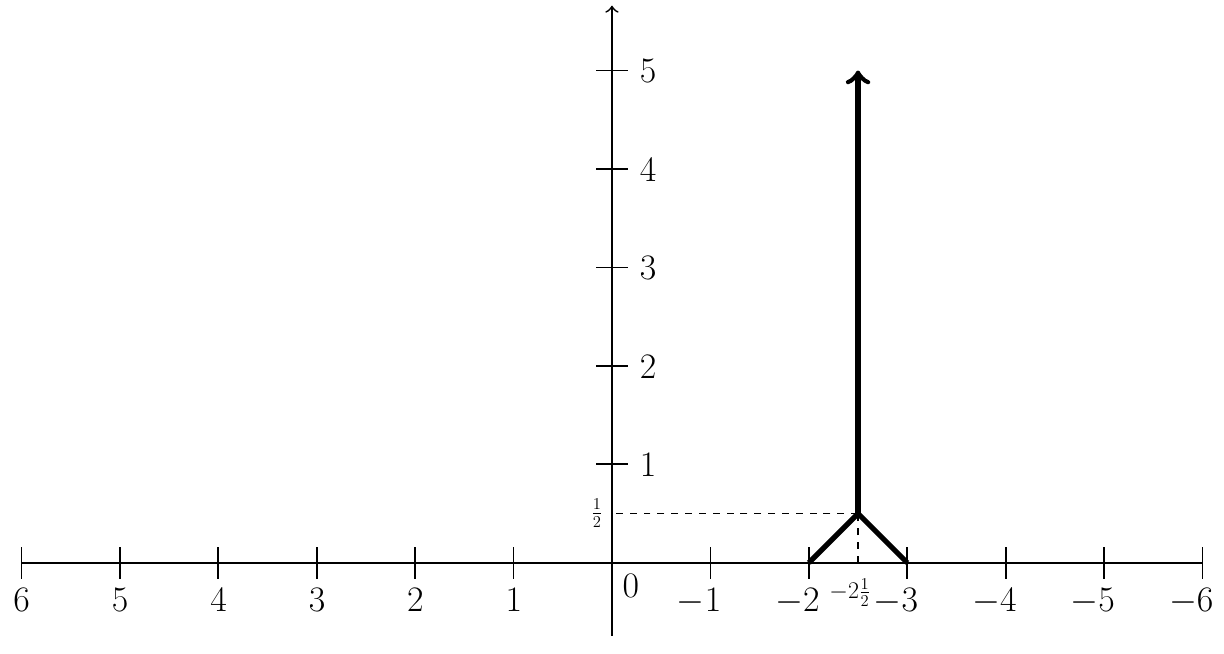}
\end{center}
To construct the thermograph of $G$, we take the right wall of the thermograph of $\{5\mid 2\}$ (as it gives the Right stop of the Left option) and shear it clockwise by subtracting $t$, and the left wall of the thermograph of $\{-2\mid -3\}$ and shear it counterclockwise by adding $t$, until their intersection, which is then topped by the mast. The thermograph of $G$ is then given below.
\begin{center}
\includegraphics[scale=0.75]{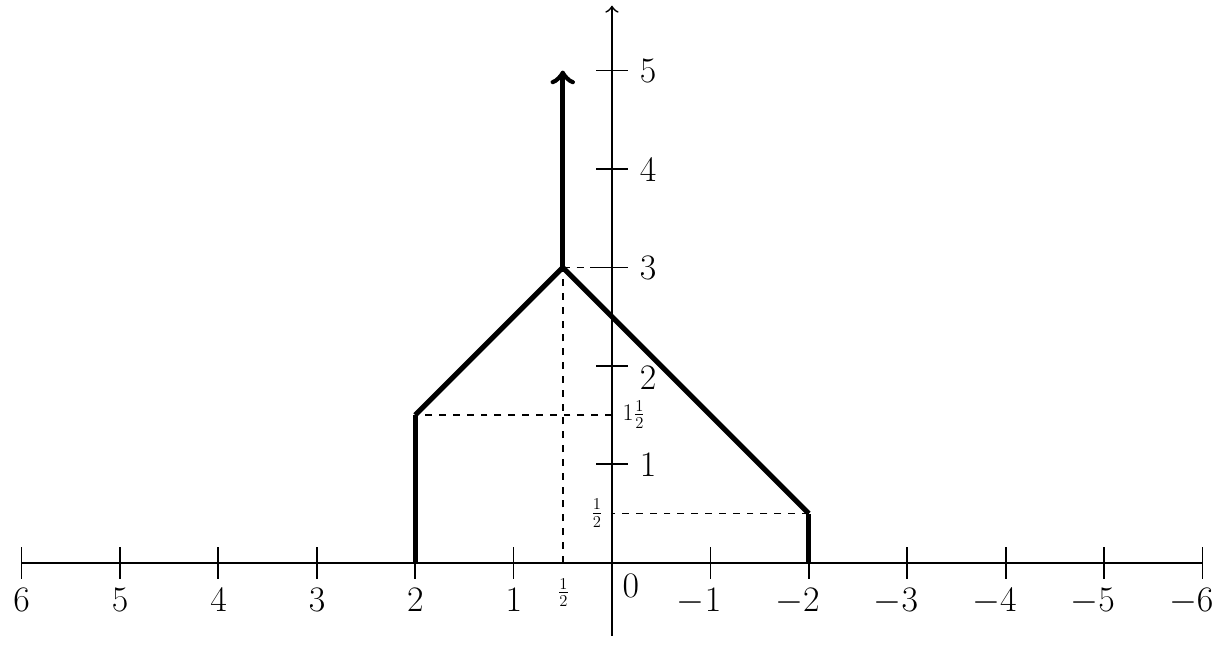}
\end{center}
\end{example}

Note that the temperature of a game $G$, being the vertical value at which the mast starts, is the length of all vertical segments plus the length of the oblique segments of either the right wall or the left wall above the horizontal axis and below the mast. We will use this property to bound the temperature of $G$. To do so, we will use $\ell(G)$.

Computer game playing programs often use temperature to find potentially good moves. As temperature is difficult to calculate though, it would be helpful to be able to bound the temperature of classes of games.

\begin{definition}
Given a class of games $S$, the \textit{boiling point} of $S$, denoted $BP(S)$, is the supremum of the temperatures of all games in $S$, thus
\[BP(S)=\sup_{G\in S}\left(t(G)\right).\]
\end{definition}
Of particular interest are the boiling points of classes of games which are the same ruleset played on different boards, often even the entire (infinite) set of boards. 

Historically, there has been much interest in temperatures of specific games. We will discuss previous work on the temperatures of \domineering and \snort in their respective sections. For information on temperatures of many other games, please see the recent survey by Berlekamp \cite{Berlekamp2019}. In particular, there has been no bound proven to hold for classes of games in general.

\section{An Upper Bound on the Boiling Point of a Game}\label{sec:BoilingPointBound}

In this section, we will demonstrate an upper bound on the boiling point of a class of games dependent solely on the maximum difference between Left and Right stops.

We will being by showing that for any game $G$ there exists a game $\widetilde{G}$ which has a single Left option and a single Right option such that $t(G)=t(\widetilde{G})$. 
\begin{theorem} 
Let $G$ be a hot game. Then, there are options $G^L$ and $G^R$ such that $t(G)=t(\{G^L\mid G^R\})$.
\end{theorem}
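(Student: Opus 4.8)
The plan is to read the two ``critical'' options straight off the thermograph of $G$ and then check that the switch-like game they define has exactly the same mast. I would lean on two standard monotonicity properties of thermograph walls, both provable by induction from the shearing construction recalled above: for any game $X$ and $t\ge -1$, the left wall $t\mapsto LS(X_t)$ is non-increasing and the right wall $t\mapsto RS(X_t)$ is non-decreasing, and (being continuous and piecewise linear) their slopes lie in $[-1,0]$ and $[0,1]$ respectively.

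Write $t^\ast=t(G)>0$. For $t\le t^\ast$ the walls of $G$ are given inductively by
\[
LS(G_t)=\max_{G^L\in G^{\mathcal L}}\bigl(RS(G^L_t)-t\bigr),\qquad RS(G_t)=\min_{G^R\in G^{\mathcal R}}\bigl(LS(G^R_t)+t\bigr).
\]
Each is a maximum (resp.\ minimum) of finitely many continuous, piecewise-linear functions, so there is an $\varepsilon>0$ together with a single Left option $G^L$ and single Right option $G^R$ attaining these extremes throughout $(t^\ast-\varepsilon,t^\ast)$; by continuity they also attain them at $t^\ast$. These are the options I would select, setting $\widetilde G=\{G^L\mid G^R\}$.

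Since $\widetilde G$ has one option on each side, its walls are simply $f(t):=RS(G^L_t)-t$ (left) and $g(t):=LS(G^R_t)+t$ (right). By the choice of $G^L,G^R$, on $(t^\ast-\varepsilon,t^\ast]$ these coincide with $LS(G_t)$ and $RS(G_t)$; in particular $f(t^\ast)=LS(G_{t^\ast})=m(G)=RS(G_{t^\ast})=g(t^\ast)$, so the walls of $\widetilde G$ meet at height $t^\ast$ and $t(\widetilde G)\le t^\ast$. The step I expect to be the crux is to rule out the walls meeting \emph{below} $t^\ast$: for $t$ well below $t^\ast$ the fixed options $G^L,G^R$ need no longer be extremal, so $\widetilde G$'s walls sit strictly inside those of $G$ and could in principle cross earlier.

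Here the monotonicity facts resolve everything. From the slope bounds, $f$ is non-increasing and $g$ is non-decreasing, hence $f-g$ is non-increasing in $t$. On $(t^\ast-\varepsilon,t^\ast)$ we have $f-g=LS(G_t)-RS(G_t)>0$, since $G$ is hot and $t<t(G)$; the monotonicity of $f-g$ then forces $f(t)>g(t)$ for every $t<t^\ast$. Thus the walls of $\widetilde G$ are strictly separated below $t^\ast$ and first meet at $t^\ast$, giving $t(\widetilde G)=t^\ast=t(G)$, as required. The only points needing careful justification are the inductive proof of the two slope statements and the standard identification of ``the walls meet at height $t^\ast$'' with ``$\widetilde G_{t^\ast}$ is infinitesimally close to a number,'' i.e.\ that the thermograph reads off the temperature correctly.
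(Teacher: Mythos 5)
Your proposal is correct and follows essentially the same route as the paper: the paper likewise reads the critical pair off the thermograph, choosing options $G^L$ and $G^R$ whose cooled forms $G^L_{t(G)}-t(G)$ and $G^R_{t(G)}+t(G)$ realize the stops $LS(G_{t(G)})=RS(G_{t(G)})=m(G)$, and then concludes $t(\{G^L\mid G^R\})=t(G)$. If anything, your write-up is more careful than the paper's terse proof: selecting options that are extremal on a whole interval $(t^\ast-\varepsilon,t^\ast)$ and using the monotonicity of $f-g$ to rule out the walls of $\{G^L\mid G^R\}$ meeting strictly below $t^\ast$ makes explicit a verification step that the paper's ``Therefore'' leaves implicit.
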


\begin{proof}
Consider $G_{t(G)}$, \ie $G$ cooled by its temperature. Since $G$ is hot we know that $G_{t(G)}$ is tepid, being equal to $m(G)$ plus some infinitesimal.

We have that $LS(G_{t(G)})=RS(G_{t(G)})=m(G)$ and those stops are achieved with Left and Right options of $G_{t(G)}$. These options are obtained from some $G^L$ and $G^R$, being $G^L_{t(G)}-t(G)$ and $G^R_{t(G)}+t(G)$. Therefore, $t(\{G^L\mid G^R\})=t(G)$.
\end{proof}

\begin{definition}\label{def:Thermic}
Let $G$ be a hot game. Then $(G^L,G^R)$ is called a pair of \textit{thermic options} of $G$ if $t(G)=t(\{G^L\mid G^R\})$. We set $\widetilde{G}=\{\widetilde{G}^L\mid \widetilde{G}^R\}$ where $\widetilde{G}^L=G^L$ and $\widetilde{G}^R=G^R$ are thermic options. We say {$\widetilde{G}$} is a \textit{thermic version} of $G$.
\end{definition}

When bounding temperatures, instead of working with $G$ we can consider a thermic version $\widetilde{G}$. This has the advantage that when we are constructing the thermograph, we only need to consider the thermographs of the unique options, rather than having to find which option has the leftmost right wall at every point.

In general, it is tedious to find a thermic version of a game $G$ as, in a worst-case scenario, all temperatures of combinations of Left and Right options would have to be checked. We will be using the thermic version for theoretical purposes only though, and for calculations will return to the original game.

Note that a thermic version of a game is not necessarily unique, as demonstrated in the following example:

\begin{example}
Consider $G=\{\{\{3\mid 1\}\mid 0\},\{2\mid 0\}\mid\{-1\mid -2\}\}$. Both $\widetilde{G}_1=\{\{\{3\mid 1\}\mid 0\}\mid\{-1\mid -2\}\}$ and $\widetilde{G}_2=\{\{2\mid 0\}\mid\{-1\mid -2\}\}$ are thermic versions of $G$.
\end{example}

Further, in many cases the thermographs of $G$ and a thermic version $\widetilde{G}$ are not identical, as shown in the following example. 

\begin{example}
Let $G=\{\{2\mid -1\},0\mid\{-2\mid -4\}\}$. The thermograph of $G$ is

\begin{center}
\includegraphics[scale=0.75]{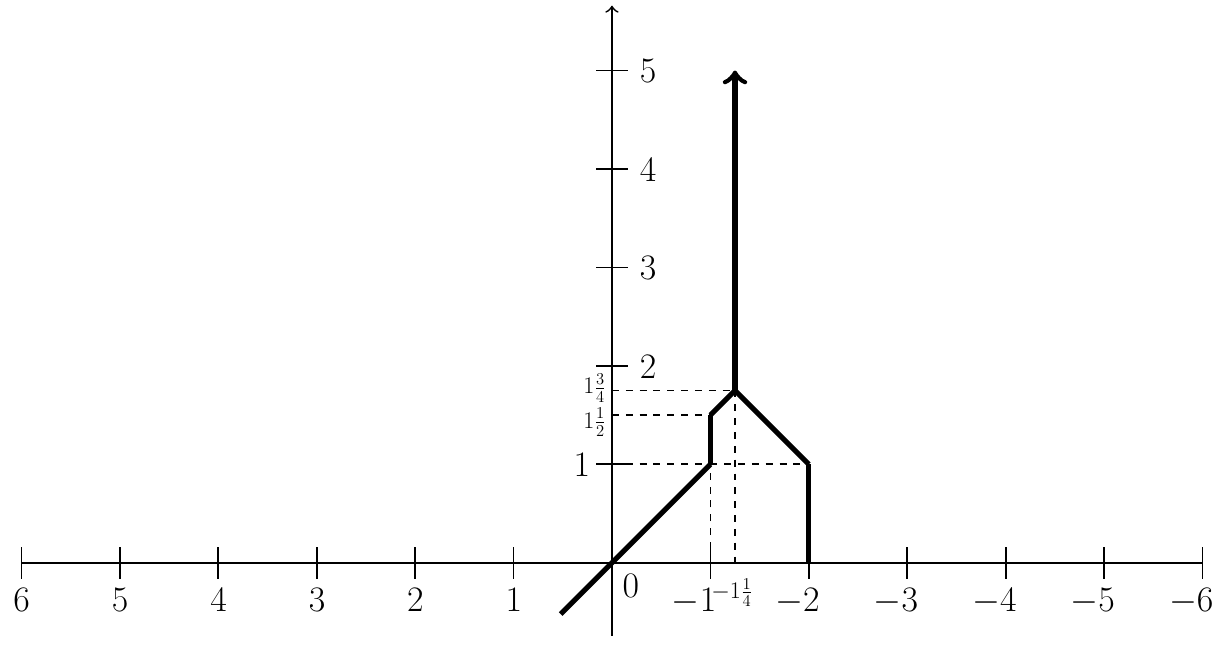}
\end{center}

A thermic version of $G$ is given by $\widetilde{G}=\{\{2\mid -1\}\mid\{-2\mid -4\}\}$. The thermograph of $\widetilde{G}$ is

\begin{center}
\includegraphics[scale=0.75]{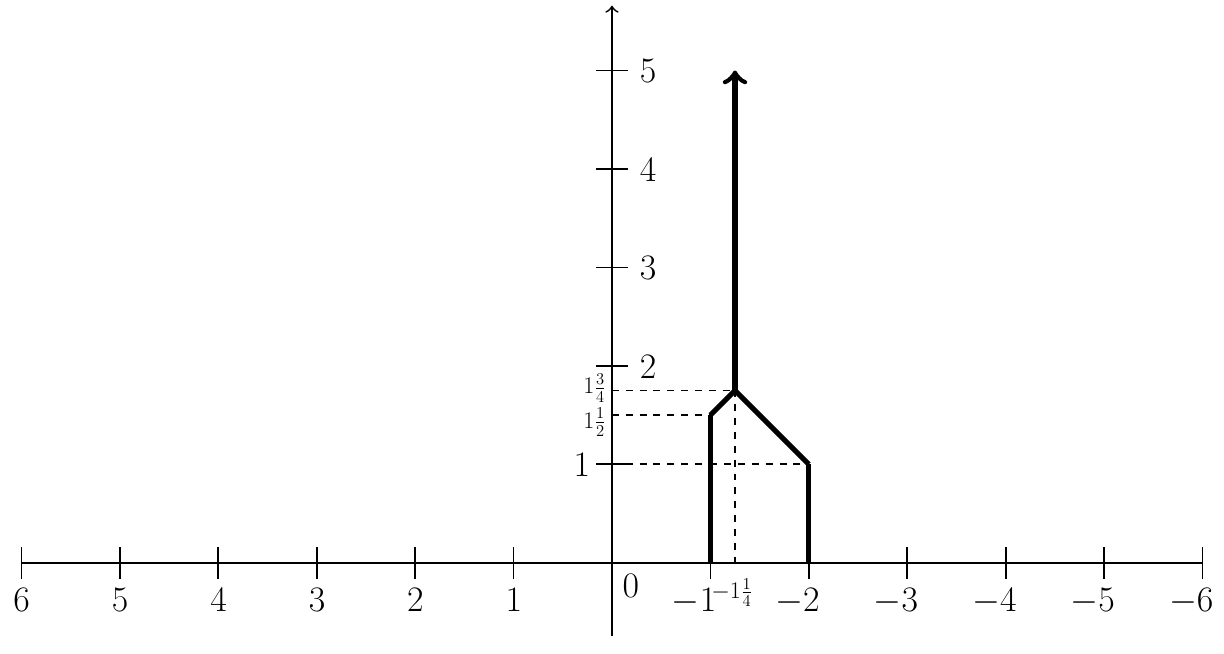}
\end{center}
\end{example}

Although the thermograph of a thermic version might not be identical to the one of the original game, the following proposition shows that it always lies under the thermograph of $G$:

\begin{proposition}
Let $G$ be a hot game and $\widetilde{G}$ a thermic version of $G$. For all $t\geq -1$ we have that $LS(\widetilde{G}_t)\leq LS(G_t)$ and $RS(\widetilde{G}_t)\geq RS(G_t)$. In particular $LS(\widetilde{G})\leq LS(G)$ and $RS(\widetilde{G})\geq RS(G)$.
\end{proposition}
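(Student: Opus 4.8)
The plan is to work throughout with the thermograph walls viewed as functions of the cooling parameter, namely the left wall $t\mapsto LS(G_t)$ and the right wall $t\mapsto RS(G_t)$, and to exploit the closed form these admit. Writing $\lambda_G(t)=\max_{G^L\in G^\mathcal{L}}\big(RS((G^L)_t)-t\big)$ for the sheared (un-masted) left wall and $\rho_G(t)=\min_{G^R\in G^\mathcal{R}}\big(LS((G^R)_t)+t\big)$ for the sheared right wall, the recursive construction of the thermograph described above gives $LS(G_t)=\max\big(\lambda_G(t),\,m(G)\big)$ and $RS(G_t)=\min\big(\rho_G(t),\,m(G)\big)$ for all $t\ge-1$: below $t(G)$ the walls follow $\lambda_G,\rho_G$, while at and above $t(G)$ they are pinned to the mast at $m(G)$, with $\lambda_G(t(G))=\rho_G(t(G))=m(G)$. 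The single observation driving everything is that the options of $\widetilde{G}$ are \emph{themselves} options of $G$: $\widetilde{G}^\mathcal{L}=\{G^L\}\subseteq G^\mathcal{L}$ and $\widetilde{G}^\mathcal{R}=\{G^R\}\subseteq G^\mathcal{R}$.

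From this I would first dispatch the sheared walls. Since $\lambda_{\widetilde{G}}(t)=RS((G^L)_t)-t$ is a single term of the maximum defining $\lambda_G(t)$, we get $\lambda_{\widetilde{G}}(t)\le\lambda_G(t)$ for every $t$; symmetrically $\rho_{\widetilde{G}}(t)\ge\rho_G(t)$. These are immediate and require nothing beyond the fact that a maximum over a larger set is larger.

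The crux is comparing the means, and this is exactly where the thermic hypothesis $t(\widetilde{G})=t(G)$ does its work. Setting $\tau=t(G)=t(\widetilde{G})$ and using that each sheared left wall meets its mast precisely at its own temperature, I obtain $m(\widetilde{G})=\lambda_{\widetilde{G}}(\tau)\le\lambda_G(\tau)=m(G)$, where the middle inequality is the one just proved, evaluated at the common height $\tau$. Combining, for every $t$,
\[
LS(\widetilde{G}_t)=\max\big(\lambda_{\widetilde{G}}(t),\,m(\widetilde{G})\big)\le\max\big(\lambda_G(t),\,m(G)\big)=LS(G_t),
\]
and the symmetric computation with $\rho$ and $\min$ yields $RS(\widetilde{G}_t)\ge RS(G_t)$. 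The ``in particular'' clause is then the special case $t=0$, since $LS(G)=LS(G_0)$ and $RS(G)=RS(G_0)$.

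I expect the only genuinely delicate step to be the mean comparison $m(\widetilde{G})\le m(G)$. It is tempting to believe the wall inequality $\lambda_{\widetilde{G}}\le\lambda_G$ transfers trivially to the masts, but because both sheared left walls are non-increasing in $t$, comparing the masts requires evaluating them at the \emph{same} height, and only the equality of temperatures makes $\tau$ available as that common height; were the temperatures different we would be comparing $\lambda_{\widetilde{G}}(t(\widetilde{G}))$ with $\lambda_G(t(G))$ at different arguments and could draw no conclusion. The remaining care is purely bookkeeping: justifying the masted closed form for $LS(G_t)$ and $RS(G_t)$ from the recursive thermograph construction already recalled above.
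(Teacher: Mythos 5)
Your proof is correct and follows essentially the same route as the paper: below the common temperature the comparison reduces, exactly as in the paper, to the fact that the single options of $\widetilde{G}$ are among the options of $G$ (so one term of a maximum is at most the maximum), and at or above it both thermographs are masts. The only difference is that where the paper simply asserts the thermographs are identical for $t\geq t(G)$, you explicitly justify the mast comparison $m(\widetilde{G})\leq m(G)$ (and, by the symmetric computation with $\rho$, also $m(\widetilde{G})\geq m(G)$, hence equality) by evaluating the wall inequalities at the common temperature $\tau=t(G)=t(\widetilde{G})$ --- a step the paper leaves implicit, and which your observation about needing a common evaluation height correctly identifies as the place where the thermic hypothesis is used.
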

\begin{proof}
Note first that $G_t$ and $\widetilde{G}_t$ are numbers only if $t\geq t(G)$, at which point the thermographs are identical.

For $t<t(G)$, we then have by definition $LS(G_t)=\max_{G^L}\left(RS(G^L_t)-t\right)$ and $LS(\widetilde{G}_t)=RS(\widetilde{G}^L_t)-t$ and similarly for the Right stops. Now since $\widetilde{G}^L$ is also a Left option of $G$, we have $LS(\widetilde{G}_t)\leq LS(G_t)$ and $RS(\widetilde{G}_t)\geq RS(G_t)$.

The special case of $t=0$ then follows immediately.
\end{proof}

\begin{corollary}
Given a hot game $G$ with a thermic version $\widetilde{G}$, we have
\[\ell(\widetilde{G})\leq \ell(G).\]
\end{corollary}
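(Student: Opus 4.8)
The plan is to read this off directly from the proposition that immediately precedes it, since the corollary is really just an arithmetic consequence. First I would unwind the definition $\ell(\widetilde{G})=LS(\widetilde{G})-RS(\widetilde{G})$ and $\ell(G)=LS(G)-RS(G)$, so that the claimed inequality $\ell(\widetilde{G})\leq\ell(G)$ becomes an assertion purely about the four stop-values.

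Next I would invoke the ``in particular'' conclusion of the preceding proposition, namely $LS(\widetilde{G})\leq LS(G)$ and $RS(\widetilde{G})\geq RS(G)$. The second of these I would rewrite as $-RS(\widetilde{G})\leq -RS(G)$, so that both inequalities point the same direction. Adding them termwise then yields
\[
\ell(\widetilde{G})=LS(\widetilde{G})-RS(\widetilde{G})\leq LS(G)-RS(G)=\ell(G),
\]
which is exactly the statement. Because the proposition already treats the case $t<t(G)$ and notes that the thermographs coincide for $t\geq t(G)$, no separate case analysis is needed here; I am only using the values at $t=0$.

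There is essentially no obstacle to overcome, since all the work has been done in the proposition. If anything, the only point worth stating explicitly is that $G$ being hot guarantees $\ell(G)>0$ and that a thermic version $\widetilde{G}$ exists (by \cref{def:Thermic} together with the earlier theorem), so that the quantities $\ell(\widetilde{G})$ and $\ell(G)$ are both well-defined; the inequality itself is then a one-line consequence of monotonicity of the Left and Right stops under passing to a thermic version.
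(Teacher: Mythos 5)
Your proposal is correct and matches the paper's own proof essentially verbatim: both derive $\ell(\widetilde{G})=LS(\widetilde{G})-RS(\widetilde{G})\leq LS(G)-RS(G)=\ell(G)$ by adding the two stop inequalities from the ``in particular'' clause of the preceding proposition. The extra remarks about well-definedness and the $t\geq t(G)$ case are harmless but unnecessary, since the corollary only uses the stops at $t=0$.
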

\begin{proof}
From the previous proposition we have
\[\ell(\widetilde{G})=LS(\widetilde{G})-RS(\widetilde{G})\leq LS(G)-RS(G)=\ell(G).\qedhere\]
\end{proof}

In this section, we will often consider segments of the thermograph. By length of such a segment we mean the change in $t$.

As we will be bounding the temperature of a game from the length of the vertical and oblique segments of the left and right walls of its thermograph, the turning points of the thermograph will be very important. We will be concentrating on the left wall throughout, but the same definitions and results apply to the right wall.
\begin{definition} 
Let $G$ be a hot game and $\widetilde{G}$ a thermic version of $G$. Let  $t_0=0,t_1,t_2,\ldots,t_k=t(G)$ be the sequence of the vertical coordinates of the turning points of the left boundary of the thermograph of $\widetilde{G}$. The Right stops of the sequence of $G^L(i)=\widetilde{G}^L_{t_i}-t_i$ define the segments of the boundary. If $RS(G^L(i+1))=RS(G^L(i))$, we have a vertical segment; on the other hand, if $RS(G^L(i+1))<RS(G^L(i))$, we have an oblique segment. Define
\begin{itemize}
  \item $t_i$ to be \textit{left vertical} if $RS(G^L(i+1))=RS(G^L(i))$;
  \item $t_i$ to be \textit{left oblique} if $RS(G^L(i+1))<RS(G^L(i))$.
\end{itemize}
We further define \[T^L_{\ver}=\sum_{\substack{t_i\text{ is left}\\\text{vertical}}} (t_{i+1}-t_i)\] and \[T^L_{\obl}=\sum_{\substack{t_i\text{ is left}\\\text{oblique}}} (t_{i+1}-t_i).\]
Right vertical, right oblique, $T_{\ver}^R$, and $T_{\obl}^R$ are defined similarly.
\end{definition}

Essentially, $T_{\ver}^L$ measures the length of the vertical segments of the left boundary between 0 and $t(G)$, while $T_{\obl}^L$ measures the oblique segments.

\begin{example}
Consider $G=\{\{\{6\mid 4\}\mid \{2\mid 0\}\}\mid\{\{0\mid -2\}\mid \{-4\mid -6\}\}\}$. Note that $G$ is its own thermic version as there are only a single Left option and a single Right option. The thermograph is given below:
\begin{center}
\includegraphics[scale=0.75]{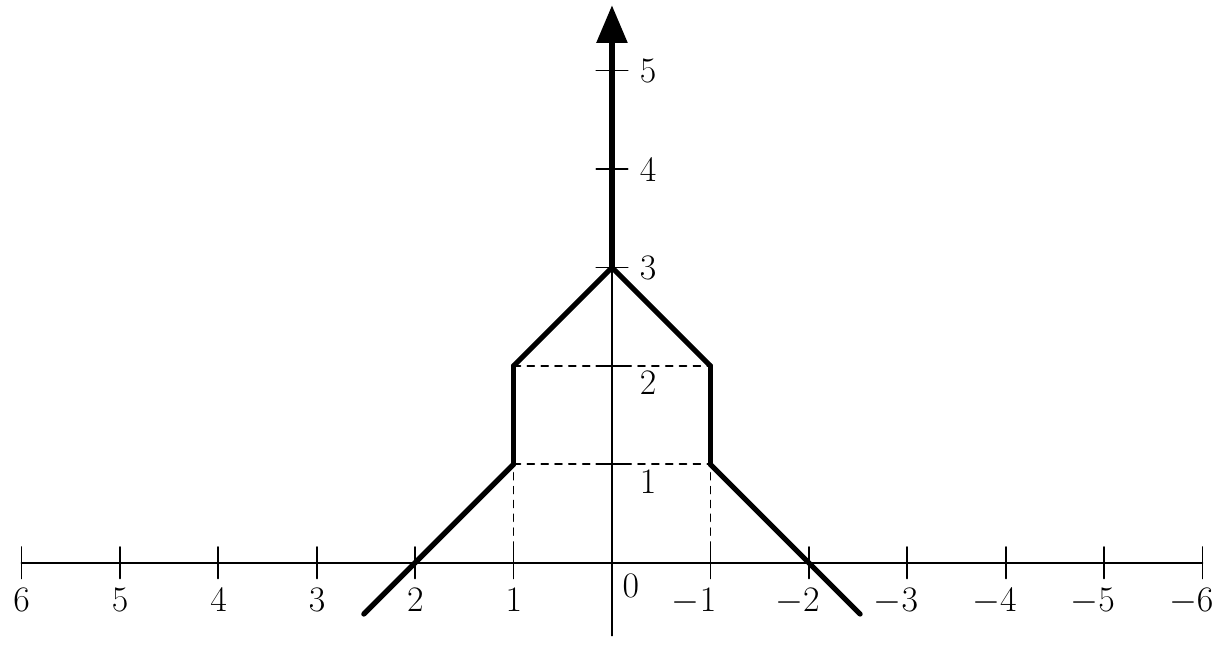}
\end{center}
The turning points of the left boundary are $t_0=0$, $t_1=1$, $t_2=2$, and $t_3=3$. We have
\begin{align*}
RS(G^L(0))&=RS(\{\{6\mid 4\}\mid\{2\mid 0\}\})=2\\
RS(G^L(1))&=RS(\{3*\mid 1*\})=1\\
RS(G^L(2))&=RS(\{1\mid 1\})=1\\
RS(G^L(3))&=RS(*)=0
\end{align*}

We have that $t_0=0$ and $t_2=2$ are left oblique and $t_1=1$ is left vertical. Thus
\[T^L_{\ver}=2-1=1 \quad\text{ and }\quad T^L_{\obl}=(1-0)+(3-2)=2.\]
Since the thermograph is symmetric, $T^R_{\ver}=1$ and $T^R_{\obl}=2$.
\end{example}

The following is an immediate consequence of the previous definition and that oblique segments have slope $\pm 1$.

\begin{lemma}
Given a hot game $G$ and a thermic version $\widetilde{G}$ we have
\[T^L_{\ver}+T^L_{\obl}=T^R_{\ver}+T^R_{\obl}=t(G)\]
and
\[\ell(\widetilde{G})=T^L_{\obl}+T^R_{\obl}.\]
\end{lemma}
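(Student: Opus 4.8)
The plan is to read the two claimed equalities directly off the definitions of $T^L_{\ver}$, $T^L_{\obl}$, $T^R_{\ver}$, $T^R_{\obl}$, together with the geometry of the thermograph's left and right walls between $t=0$ and $t=t(G)$. The key structural fact I would invoke is the one already stated in the excerpt: oblique segments of a thermograph have slope $\pm 1$, while vertical segments have undefined slope (they are parallel to the mast). I would begin by observing that the turning points $t_0=0,t_1,\dots,t_k=t(G)$ partition the interval $[0,t(G)]$ into the consecutive subintervals $[t_i,t_{i+1}]$, each of which is labelled either left vertical or left oblique according to the definition. Since every subinterval receives exactly one label and the lengths $t_{i+1}-t_i$ sum telescopically to $t_k-t_0=t(G)$, summing over the two label classes gives
\[T^L_{\ver}+T^L_{\obl}=\sum_{i=0}^{k-1}(t_{i+1}-t_i)=t(G).\]
The identical argument applied to the right wall yields $T^R_{\ver}+T^R_{\obl}=t(G)$, establishing the first chain of equalities.

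For the second equality, $\ell(\widetilde{G})=T^L_{\obl}+T^R_{\obl}$, I would work from $\ell(\widetilde{G})=LS(\widetilde{G})-RS(\widetilde{G})$ and account separately for how much the left wall moves horizontally and how much the right wall moves horizontally as $t$ increases from $0$ up to $t(G)$. The left wall starts at horizontal coordinate $LS(\widetilde{G})=LS(\widetilde{G}_0)$ when $t=0$ and ends at the mean $m(G)$ when $t=t(G)$, where both walls meet. Over a left vertical subinterval the horizontal coordinate $RS(G^L(i))$ of the left wall does not change by definition, so it contributes nothing to the total horizontal displacement; over a left oblique subinterval of length $t_{i+1}-t_i$ the wall has slope $\pm 1$, so its horizontal coordinate decreases by exactly $t_{i+1}-t_i$. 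Summing, the total horizontal distance the left wall travels from $LS(\widetilde{G})$ down to $m(G)$ equals $T^L_{\obl}$, giving
\[LS(\widetilde{G})-m(G)=T^L_{\obl}.\]
By the symmetric computation on the right wall, which rises from $RS(\widetilde{G})$ up to $m(G)$ with oblique segments of slope $\pm1$, I get $m(G)-RS(\widetilde{G})=T^R_{\obl}$. Adding these two relations eliminates $m(G)$ and produces $\ell(\widetilde{G})=LS(\widetilde{G})-RS(\widetilde{G})=T^L_{\obl}+T^R_{\obl}$.

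The only delicate point, and the step I would treat most carefully, is the claim that the net horizontal displacement of the left wall is the \emph{sum} of the oblique-segment lengths rather than a signed quantity that could involve cancellation. This requires knowing that along the left wall the horizontal coordinate is monotone (nonincreasing) as $t$ increases, so that each oblique segment contributes a decrease of its full length with no reversals. This monotonicity is exactly what the definition encodes: the label left oblique is assigned precisely when $RS(G^L(i+1))<RS(G^L(i))$ and left vertical when $RS(G^L(i+1))=RS(G^L(i))$, so the sequence $RS(G^L(i))$ is by construction nonincreasing, and the oblique slope $\pm1$ is really slope $-1$ in the direction of increasing $t$ for the left wall. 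Once this monotone-descent observation is in place, both equalities follow by the telescoping and slope bookkeeping sketched above, and no further computation is needed.
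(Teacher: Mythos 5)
Your proof is correct and is essentially the paper's own argument spelled out: the paper gives no separate proof, stating only that the lemma is ``an immediate consequence of the previous definition and that oblique segments have slope $\pm 1$,'' and your telescoping sum for $T^L_{\ver}+T^L_{\obl}=t(G)$ together with the slope bookkeeping for $\ell(\widetilde{G})=T^L_{\obl}+T^R_{\obl}$ is precisely that consequence made explicit. Your careful treatment of the monotonicity of the sequence $RS(G^L(i))$, which rules out cancellation, is a worthwhile addition rather than a deviation.
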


We will now demonstrate the bound on the temperature of a game from the measure of its confusion interval and those of its options using the vertical and oblique segments of the thermograph.

\begin{theorem}\label{thm:TempBound}
Let $G$ be a hot game and $\widetilde{G}$ a thermic version of $G$. 
Then \[t(G)\leq \ell(H)+\frac{\ell(G)}{2}\]
where $H=\widetilde{G}^L$ if $T^L_{\ver}\geq T^R_{\ver}$ and $H=\widetilde{G}^R$ otherwise.
\end{theorem}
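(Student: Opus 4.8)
The plan is to control the temperature through the two vertical totals $T^L_{\ver}$ and $T^R_{\ver}$: I will show that each is at most the confusion interval of the corresponding option, and then feed this into the identities of the preceding lemma. Adding $T^L_{\ver}+T^L_{\obl}=t(G)$ to $T^R_{\ver}+T^R_{\obl}=t(G)$ and substituting $\ell(\widetilde{G})=T^L_{\obl}+T^R_{\obl}$ yields the clean relation
\[ 2\,t(G)=\ell(\widetilde{G})+T^L_{\ver}+T^R_{\ver}, \]
so everything reduces to bounding $T^L_{\ver}+T^R_{\ver}$.

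The key step is a geometric correspondence coming from the fact that a thermic version $\widetilde{G}$ has a single Left option. For $t<t(G)$ the left wall is given exactly by $LS(\widetilde{G}_t)=RS(\widetilde{G}^L_t)-t$, so the left boundary of $\widetilde{G}$ is precisely the right wall of the thermograph of $\widetilde{G}^L$ sheared by subtracting $t$. Comparing successive turning points, the left wall of $\widetilde{G}$ is vertical exactly on those intervals where $RS(\widetilde{G}^L_t)$ rises at unit rate, that is, where the right wall of $\widetilde{G}^L$ is oblique; shearing converts these oblique segments of slope $+1$ into vertical ones. Hence the left vertical segments of $\widetilde{G}$ are in correspondence with oblique segments of the right wall of $\widetilde{G}^L$, giving
\[ T^L_{\ver}\le T^R_{\obl}(\widetilde{G}^L). \]
This is an inequality rather than an equality because the left wall of $\widetilde{G}$ only consumes the right wall of $\widetilde{G}^L$ up to height $t(G)$, which may lie below $t(\widetilde{G}^L)$ (as happens for options carrying large threats). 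Finally, since oblique segments have slope $\pm1$, the oblique part of the right wall of $\widetilde{G}^L$ accounts for the entire rise from $RS(\widetilde{G}^L)$ to $m(\widetilde{G}^L)$, so $T^R_{\obl}(\widetilde{G}^L)=m(\widetilde{G}^L)-RS(\widetilde{G}^L)\le LS(\widetilde{G}^L)-RS(\widetilde{G}^L)=\ell(\widetilde{G}^L)=\ell(G^L)$. Thus $T^L_{\ver}\le\ell(G^L)$, and symmetrically $T^R_{\ver}\le\ell(G^R)$.

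It then remains to assemble the pieces. Assume without loss of generality that $T^L_{\ver}\ge T^R_{\ver}$, so $H=\widetilde{G}^L=G^L$; the other case is identical after exchanging the roles of the two walls. Then $T^L_{\ver}+T^R_{\ver}\le 2\,T^L_{\ver}\le 2\,\ell(H)$, and using $\ell(\widetilde{G})\le\ell(G)$ from the earlier corollary,
\[ 2\,t(G)=\ell(\widetilde{G})+T^L_{\ver}+T^R_{\ver}\le\ell(G)+2\,\ell(H), \]
which gives $t(G)\le\ell(H)+\ell(G)/2$ as required. I expect the main obstacle to be making the shearing correspondence precise: verifying that a vertical left-wall segment of $\widetilde{G}$ genuinely corresponds to an oblique right-wall segment of $\widetilde{G}^L$, and correctly handling the truncation at height $t(G)$ when $t(\widetilde{G}^L)>t(G)$, which is exactly what forces the step $T^L_{\ver}\le T^R_{\obl}(\widetilde{G}^L)$ to be an inequality.
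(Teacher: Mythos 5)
Your proof is correct and follows essentially the same route as the paper's: the same shearing correspondence (vertical segments of the left wall of $\widetilde{G}$ come from oblique segments of the right wall of $\widetilde{G}^L$) yielding $T^L_{\ver}\le\ell(\widetilde{G}^L)$, combined with the identities $T^L_{\ver}+T^L_{\obl}=T^R_{\ver}+T^R_{\obl}=t(G)$, $\ell(\widetilde{G})=T^L_{\obl}+T^R_{\obl}$, and $\ell(\widetilde{G})\le\ell(G)$, under the same case assumption $T^L_{\ver}\ge T^R_{\ver}$. Summing the two wall identities to get $2t(G)=\ell(\widetilde{G})+T^L_{\ver}+T^R_{\ver}$ is only a cosmetic rearrangement of the paper's step deducing $T^L_{\obl}\le\ell(\widetilde{G})/2$ from $T^L_{\obl}\le T^R_{\obl}$, so the two arguments coincide in substance.
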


\begin{proof}
We will demonstrate this bound in the case of $T^L_{\ver}\geq T^R_{\ver}$. The second case follows similarly.

First consider $T_{\ver}^L$. Since the left boundary of the thermograph of $\widetilde{G}$ comes from the right boundary of the thermograph of $\widetilde{G}^L$, we know that $T_{\ver}^L$ is at most the length of the oblique segments of the latter. Further, we know that the length of these oblique segments are at most the distance between the Left and Right stops. Thus $T_{\ver}^L\leq \ell(\widetilde{G}^L)$.

On the other hand, $T^L_{\ver}+T^L_{\obl}=T^R_{\ver}+T^R_{\obl}$ and, by assumption, $T^L_{\ver}\geq T^R_{\ver}$. Hence, $T^L_{\obl}\leq T^R_{\obl}$. We have
\[2\times T^L_{\obl}\leq T^L_{\obl}+T^R_{\obl}=\ell(\widetilde{G})\leq \ell(G)\]
so that\[T^L_{\obl}=\frac{\ell(\widetilde{G})}{2}\leq \frac{\ell(G)}{2}.\]

Therefore,
$t(G)=T^L_{\ver}+T^L_{\obl}\leq \ell(\widetilde{G}^L)+\frac{\ell(G)}{2}$.
\end{proof}

Working with the thermic options above had the advantage that we only had to consider a single option for each player. Note that the thermic version of a game $G$ is difficult to determine in general though. We will instead bound the measure of the confusion interval for \textit{all} options in our applications in the next section. 

The following theorem is a direct result of the previous one. It is the first known theorem giving an upper bound on the boiling point of a class of games.
\begin{theorem}\label{thm:BPbound1}
Let $S$ be a class of short games and $J,K$ be two non-negative numbers. If for all $G\in S$, we have $\ell(G)\leq K$ and for all $G^L$ and $G^R$ that $\ell(G^L), \ell(G^R)\leq J$, then \[BP(S)\leq \frac{K}{2}+J.\]
\end{theorem}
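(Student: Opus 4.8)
The plan is to derive this directly from \cref{thm:TempBound}, exactly as the remark preceding the statement suggests: the content has essentially already been established, and what remains is a routine packaging argument together with careful attention to the hypotheses. Since $BP(S)=\sup_{G\in S}t(G)$, it suffices to prove that $t(G)\leq K/2+J$ for each individual $G\in S$ and then pass to the supremum over $S$.

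First I would dispose of the games in $S$ that are not hot, since \cref{thm:TempBound} is stated only for hot games and the thermic machinery is only defined in that case. If $G$ is cold then $t(G)<0$, and if $G$ is tepid then $t(G)=0$; in either case $t(G)\leq 0\leq K/2+J$, using that $J$ and $K$ are non-negative. So these games never violate the claimed bound.

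For a hot game $G\in S$, I would fix a thermic version $\widetilde{G}$ and invoke \cref{thm:TempBound}, obtaining $t(G)\leq \ell(H)+\ell(G)/2$, where $H$ is one of the two thermic options $\widetilde{G}^L=G^L$ or $\widetilde{G}^R=G^R$. The decisive point is that, by \cref{def:Thermic}, a thermic option is a genuine option of $G$: $H$ is literally a Left option or a Right option of $G$. Consequently the hypothesis $\ell(G^L),\ell(G^R)\leq J$ applies to yield $\ell(H)\leq J$, while $\ell(G)\leq K$ holds by assumption. Substituting gives $t(G)\leq J+K/2$, and taking the supremum over all $G\in S$ produces $BP(S)\leq K/2+J$.

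I do not expect a genuine obstacle, as the real work was already done in \cref{thm:TempBound}. The only places that require a moment's care are the reduction of the boiling point to a per-game inequality, the separate handling of non-hot games where the thermograph/thermic-version framework does not apply, and the recognition that the option $H$ selected by \cref{thm:TempBound} is an honest option of $G$, so that the uniform bound $J$ on $\ell(G^L),\ell(G^R)$ is exactly what is needed to control $\ell(H)$.
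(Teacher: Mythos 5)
Your proposal is correct and follows exactly the route the paper takes: the paper states \cref{thm:BPbound1} as a direct consequence of \cref{thm:TempBound} without spelling out details, and your packaging --- disposing of cold and tepid games via $t(G)\leq 0\leq K/2+J$, applying \cref{thm:TempBound} to a thermic version, noting that the thermic option $H$ is an honest option of $G$ so $\ell(H)\leq J$, and passing to the supremum --- is precisely the intended argument.
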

Note that if $S$ is closed under options, then $K=J$.

The next example will demonstrate that this bound is tight in some cases.
\begin{example} 
Consider $S=\{G:\ell(G),\ell(G^L),\ell(G^R)\leq 6\}$ the set of short games $G$ for which $\ell(G)\leq 6$, closed under options.

By the previous theorem we know that $BP(S)\leq 9$. Consider the following sequence of games, all of which belong to $S$:
\begin{equation*}
\begin{aligned}
G_0&=\pm\{9\mid 3\} &t(G_0)=6\;\\
G_1&=\pm\{\{15\,|\,9\}\,|\,3\} &t(G_1)=\frac{15}{2}\\
G_2&=\pm\{\{\{21\,|\,15\}\,|\,9\}\,|\,3\} &t(G_2)=\frac{33}{4}\\
G_3&=\pm\{\{\{\{27\,|\,21\}\,|\,15\}\,|\,9\}\,|\,3\} &t(G_3)=\frac{69}{8}\\
&(\ldots)
\end{aligned}
\end{equation*}
For this sequence, we have $\displaystyle t(G_n)=9-\frac{3}{2^n}$, and as $n$ increases the temperature approaches 9. Therefore, $BP(S)=9$.
\end{example}

We will use \cref{thm:BPbound1} in the next section to give upper bounds on the boiling point for specific SP-games. The following proposition will be used to bound $\ell(G)$.
\begin{proposition}\label{thm:boundell}
Given a hot game $G$, if we know that $G^L-G-K+\epsilon\leq 0$ for all Left options $G^L$, a fixed number $K$, and some infinitesimal $\epsilon$, then $\ell(G)\leq K$.
\end{proposition}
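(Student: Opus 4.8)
The plan is to reduce the inequality $\ell(G) \le K$ to a single application of the stop inequalities recorded in the Background section. Writing $\ell(G) = LS(G) - RS(G)$, the goal is to show $LS(G) - RS(G) \le K$. Since $G$ is hot it is not a number, so $LS(G) = \max_{G^L \in G^\mathcal{L}} RS(G^L)$ and, $G$ being short, this maximum is attained by some option, say $G^{L*}$, giving $LS(G) = RS(G^{L*})$. The crucial observation is that the hypothesis is quantified over \emph{all} Left options, so in particular it holds for this distinguished $G^{L*}$; thus I am free to work with the one option that realizes the Left stop.

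Next I would turn the hypothesis into an inequality on Left stops. Applying the fact that $A \le 0$ forces $LS(A) \le 0$ to $A = G^{L*} - G - K + \epsilon$ gives $LS(G^{L*} - G - K + \epsilon) \le 0$. I then peel this apart using the two tools $LS(X + x) = LS(X) + x$ for a number $x$ (here $x = -K$) and $LS(A + B) \ge LS(A) + RS(B)$ with $B = \epsilon$; since $\epsilon$ is infinitesimal, $RS(\epsilon) = 0$, so the $\epsilon$ term is harmless and I obtain $LS(G^{L*} - G) - K \le 0$. Finally, the lower bound $LS(A + B) \ge RS(A) + LS(B)$ with $A = G^{L*}$ and $B = -G$, combined with $LS(-G) = -RS(G)$ and $RS(G^{L*}) = LS(G)$, yields $LS(G^{L*} - G) \ge LS(G) - RS(G) = \ell(G)$. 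Stringing these together gives $\ell(G) - K \le LS(G^{L*} - G) - K \le 0$, which is exactly $\ell(G) \le K$.

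The routine part is the algebra of stops; the part that needs care is the direction of every inequality and the treatment of the infinitesimal $\epsilon$. I expect the main obstacle to be making sure the bounds on $LS$ of a sum are used on the correct side: the lower bound $LS(A+B) \ge RS(A) + LS(B)$ is what converts the difference $G^{L*} - G$ into the confusion length $\ell(G)$, while the monotonicity $A \le 0 \Rightarrow LS(A) \le 0$ supplies the matching upper bound, and these must meet with no slack lost to $\epsilon$. Verifying that $RS(\epsilon) = 0$ lets the infinitesimal drop out cleanly, rather than contributing an unwanted shift, is the one spot where the hypothesis' use of an infinitesimal, instead of a strict inequality, really matters.
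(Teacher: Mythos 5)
Your proof is correct and takes essentially the same route as the paper's: you choose the Left option realizing $LS(G)=RS(G^{L*})$, convert the hypothesis into $LS(G^{L*}-G)\leq K$, and then bound $\ell(G)=RS(G^{L*})+LS(-G)\leq LS(G^{L*}-G)\leq K$ using the stop inequalities for sums. Your explicit handling of the infinitesimal via $RS(\epsilon)=0$ merely spells out a step the paper's proof leaves implicit.
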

\begin{proof}
Since $G^L-G+\epsilon\leq K$, we have $LS(G^L-G)\leq K$. Thus (remember that $LS(G)=RS(G^L)$ for some $G^L$):
\begin{align*}
\ell(G)&=LS(G)-RS(G)\\
&=RS(G^L)-RS(G)\\
&=RS(G^L)+LS(-G)\\
&\leq LS(G^L-G)\\
&\leq K\qedhere
\end{align*}
\end{proof}
 
The game $G^L-G-K+\epsilon$ corresponds, in some sense, to letting Left play twice in $G$, balancing that with $K-\epsilon$. If, for every game in that class, we can bound the effect of that second move, then we bound the confusion intervals for the class.  The infinitesimal is important since if Left moves the game or follower of $G^L-G$ to one of temperature 0, Right can respond in the infinitesimal instead of the integer. For example in the proof of \cref{thm:SnortTemp} is it possible to replace $K=5$ with $4+\uparrow$? If so, this would reduce the upper bound on the temperature to 6.

Our goal thus is to find the minimal number $K$ and an infinitesimal $\epsilon$ for which Right has a winning strategy going second in $G^L-G-K+\epsilon$.

As examples, we will show how to apply this to two games for specific classes of boards, \domineering snakes and \snort on a path, and discuss their temperatures on other boards. Note that since $t(G+H)\leq\max\{t(G),t(H)\}$, it is sufficient to bound the temperature when playing on a connected board.

\section{\domineering}\label{sec:domineering}
Berlekamp conjectured in the late 1980's that the boiling point of \domineering is 2 (see \cite{Guy1996,ShankarS2005}). The values and temperature of \domineering have been the main point of interest in many papers (see for example \cite{Berlekamp1988,Wolfe1993,Kim1995,Kim1996,ShankarS2005,DrummondCole2005,UiterwijkB2015}), all of which support Berlekamp's conjecture. It is known that if the conjecture is correct, then the bound is tight as the position below, found by Drummond-Cole in 2004 \cite{DrummondCole20xx}, has 
temperature 2.
\begin{center}
\begin{tikzpicture}[scale=0.75]
\draw[line width=1.25pt] (0,5)--(1,5)--(1,4)--(2,4)--(2,5)--(3,5)--(3,3)--(5,3)--(5,2)--(4,2)--(4,1)--(5,1)--(5,0)--(3,0)--(3,1)--(1,1)--(1,3)--(0,3)--cycle;
\draw (4,0)--(4,1)--(3,1)--(3,3)--(1,3)--(1,4)--(0,4);
\draw (2,1)--(2,4)--(3,4);
\draw (1,2)--(4,2)--(4,3);
\end{tikzpicture}
\end{center}
However, there has been no general theorem which states an upper bound for the temperature of \Domineering.

\subsection{\domineering Snakes}
We will start by considering \domineering snakes that fit within a $2\times n$ grid.

A \domineering snake is a \domineering position in which the board in some sense has `width' 1. They can be inductively constructed as follows: 
\begin{itemize}
\item[Step 1:] Place a single square.
\item[Step $n$:] Attach a new square at the top, right, or bottom edge of the square placed in step $n-1$. When doing so, no $2\times 2$ subgrid may be formed.
\end{itemize}

An example of a snake is below.
\begin{center}
\begin{tikzpicture}[line cap=round,line join=round,>=triangle 45,x=1.0cm,y=1.0cm,scale=0.5]
\draw (-3.,2.)-- (-3.,1);
\draw (-3.,1.)-- (0.,1.);
\draw (0.,1.)-- (0.,4.);
\draw (0.,4.)-- (2.,4.);
\draw (2.,4.)-- (2.,0.);
\draw (2.,0.)-- (5.,0.);
\draw (5.,0.)-- (5.,2.);
\draw (5.,2.)-- (4.,2.);
\draw (4.,2.)-- (4.,1.);
\draw (4.,1.)-- (3.,1.);
\draw (3.,1.)-- (3.,5.);
\draw (3.,5.)-- (-1.,5.);
\draw (-1.,5.)-- (-1.,2.);
\draw (-1.,2.)-- (-3.,2.);
\draw (-2.,2.)-- (-2.,1.);
\draw (-1.,2.)-- (-1.,1.);
\draw (-1.,2.)-- (0.,2.);
\draw (-1.,3.)-- (0.,3.);
\draw (-1.,4.)-- (0.,4.);
\draw (0.,4.)-- (0.,5.);
\draw (1.,5.)-- (1.,4.);
\draw (2.,4.)-- (2.,5.);
\draw (2.,5.)-- (2.,4.);
\draw (2.,4.)-- (3.,4.);
\draw (3.,3.)-- (2.,3.);
\draw (2.,2.)-- (3.,2.);
\draw (2.,1.)-- (3.,1.);
\draw (3.,1.)-- (3.,0.);
\draw (4.,1.)-- (4.,0.);
\draw (4.,1.)-- (5.,1.);
\end{tikzpicture}
\end{center}

\domineering snakes are interesting as any move, whether by Left or by Right, results in a disjunctive sum of two smaller snakes. Thus they are amenable to a recursive study. Further, they often naturally occur during play on larger grids.

Conway in 1976 in the first edition of \cite[pp.114-121]{Conway2001} gives a characterization of snakes in which a square is added to the right alternating with a square up. His results show that such snakes have temperature at most 1.

Wolfe in 1993 \cite{Wolfe1993} gives reductions that show it is sufficient to consider snakes in which at most 4 squares are added in either direction. He proceeds to give values of all periodic snakes - those snakes in which, after an initial chain, the number of squares added vertically is always the same, as is the number of squares added horizontally. And finally, values are given for many repeating snakes fitting within a $3\times n$ grid, so snakes in which always two squares are added horizontally.

The snakes we consider fit into a $2\times n$ grid, thus they are snakes where always at least two squares are added to the right and exactly one square horizontally, alternating between up and down. We do not make any assumptions on repeating patterns though, so that many of the cases considered here are not covered by the results by Wolfe in 1993 \cite{Wolfe1993}.

As an example, the snake below on the left is considered as fitting into a $2\times n$ grid as it can be folded by alternating vertical addition up and down into the snake on the right without changing the game.

\begin{center}
\begin{tikzpicture}[line cap=round,line join=round,>=triangle 45,x=1.0cm,y=1.0cm,scale=0.75]
\draw (-2.,-2.) -- (-1.,-2.) -- (-1.,-1.) -- (1.,-1.) -- (1.,0.) -- (4.,0.) -- (4.,2.) -- (3.,2.) -- (3.,1.) -- (0.,1.) -- (0.,0.) -- (-2.,0.) -- cycle;
\draw (7.,-2.) -- (7.,0.) -- (10.,0.) -- (10.,-1.) -- (12.,-1.) -- (12.,0.) -- (13.,0.) -- (13.,-2.) -- (9.,-2.) -- (9.,-1.) -- (8.,-1.) -- (8.,-2.) -- cycle;
\draw  (-2.,-2.)-- (-1.,-2.);
\draw  (-1.,-2.)-- (-1.,-1.);
\draw  (-1.,-1.)-- (1.,-1.);
\draw  (1.,-1.)-- (1.,0.);
\draw  (1.,0.)-- (4.,0.);
\draw  (4.,0.)-- (4.,2.);
\draw  (4.,2.)-- (3.,2.);
\draw  (3.,2.)-- (3.,1.);
\draw  (3.,1.)-- (0.,1.);
\draw  (0.,1.)-- (0.,0.);
\draw  (0.,0.)-- (-2.,0.);
\draw  (-2.,0.)-- (-2.,-2.);
\draw  (7.,-2.)-- (7.,0.);
\draw  (7.,0.)-- (10.,0.);
\draw  (10.,0.)-- (10.,-1.);
\draw  (10.,-1.)-- (12.,-1.);
\draw  (12.,-1.)-- (12.,0.);
\draw  (12.,0.)-- (13.,0.);
\draw  (13.,0.)-- (13.,-2.);
\draw  (13.,-2.)-- (9.,-2.);
\draw  (9.,-2.)-- (9.,-1.);
\draw  (9.,-1.)-- (8.,-1.);
\draw  (8.,-1.)-- (8.,-2.);
\draw  (8.,-2.)-- (7.,-2.);
\draw  (-2.,-1.)-- (-1.,-1.);
\draw  (-1.,-1.)-- (-1.,0.);
\draw  (0.,0.)-- (0.,-1.);
\draw  (0.,0.)-- (1.,0.);
\draw  (1.,0.)-- (1.,1.);
\draw  (2.,1.)-- (2.,0.);
\draw  (3.,1.)-- (3.,0.);
\draw  (3.,1.)-- (4.,1.);
\draw  (7.,-1.)-- (8.,-1.);
\draw  (8.,-1.)-- (8.,0.);
\draw  (9.,0.)-- (9.,-1.);
\draw  (9.,-1.)-- (10.,-1.);
\draw  (10.,-1.)-- (10.,-2.);
\draw  (11.,-1.)-- (11.,-2.);
\draw  (12.,-1.)-- (12.,-2.);
\draw  (12.,-1.)-- (13.,-1.);
\draw (5.36,-0.1) node[anchor=north west] {$=$};
\end{tikzpicture}
\end{center}

On the other hand, the snake below does not fit into a $2\times n$ grid since folding it the same way results in a $2\times 2$ subgrid, thus changing the game.

\begin{center}
\begin{tikzpicture}[line cap=round,line join=round,>=triangle 45,x=1.0cm,y=1.0cm, scale=0.75]
\draw (0.,-7.) -- (0.,-5.) -- (1.,-5.) -- (1.,-4.) -- (4.,-4.) -- (4.,-3.) -- (5.,-3.) -- (5.,-5.) -- (2.,-5.) -- (2.,-6.) -- (1.,-6.) -- (1.,-7.) -- cycle;
\draw (8.,-7.) -- (8.,-5.) -- (10.,-5.) -- (10.,-6.) -- (12.,-6.) -- (12.,-5.) -- (13.,-5.) -- (13.,-7.) -- cycle;
\draw (6.3,-5.04) node[anchor=north west] {$=$};
\draw  (0.,-7.)-- (0.,-5.);
\draw  (0.,-5.)-- (1.,-5.);
\draw  (1.,-5.)-- (1.,-4.);
\draw  (1.,-4.)-- (4.,-4.);
\draw  (4.,-4.)-- (4.,-3.);
\draw  (4.,-3.)-- (5.,-3.);
\draw  (5.,-3.)-- (5.,-5.);
\draw  (5.,-5.)-- (2.,-5.);
\draw  (2.,-5.)-- (2.,-6.);
\draw  (2.,-6.)-- (1.,-6.);
\draw  (1.,-6.)-- (1.,-7.);
\draw  (1.,-7.)-- (0.,-7.);
\draw  (8.,-7.)-- (8.,-5.);
\draw  (8.,-5.)-- (10.,-5.);
\draw  (10.,-5.)-- (10.,-6.);
\draw  (10.,-6.)-- (12.,-6.);
\draw  (12.,-6.)-- (12.,-5.);
\draw  (12.,-5.)-- (13.,-5.);
\draw  (13.,-5.)-- (13.,-7.);
\draw  (13.,-7.)-- (8.,-7.);
\draw  (1.,-6.)-- (0.,-6.);
\draw  (1.,-5.)-- (1.,-6.);
\draw  (1.,-5.)-- (2.,-5.);
\draw  (2.,-5.)-- (2.,-4.);
\draw  (3.,-4.)-- (3.,-5.);
\draw  (4.,-4.)-- (4.,-5.);
\draw  (4.,-4.)-- (5.,-4.);
\draw  (9.,-5.)-- (9.,-7.);
\draw  (8.,-6.)-- (10.,-6.);
\draw  (10.,-6.)-- (10.,-7.);
\draw  (11.,-6.)-- (11.,-7.);
\draw  (12.,-6.)-- (12.,-7.);
\draw  (12.,-6.)-- (13.,-6.);
\end{tikzpicture}
\end{center}

For our snakes any move results in a disjunctive sum of two smaller games, which are still \domineering snakes fitting within a $2\times n$ grid. We will take advantage of this in the following proof.

\begin{proposition}\label{thm:BODomSnakes}
The boiling point of \domineering played on a snake fitting within a $2\times n$ grid is at most 3.
\end{proposition}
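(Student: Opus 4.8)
The plan is to apply \cref{thm:BPbound1} to the class $S$ of connected \domineering snakes fitting within a $2\times n$ grid, with constants $K=J=2$. Since $t(G+H)\le\max\{t(G),t(H)\}$, a position that is a disjunctive sum of such snakes has temperature at most that of its hottest component, so bounding the temperature on connected snakes suffices; and \cref{thm:BPbound1} with $K=J=2$ gives $BP(S)\le K/2+J=3$, which is exactly the claim. The whole argument thus reduces to two measure-of-confusion bounds: $\ell(G)\le 2$ for every connected snake $G$, and $\ell(G^L),\ell(G^R)\le 2$ for every option. (If $G$ is not hot then $\ell(G)=0$ and there is nothing to prove, so throughout I may assume $G$ is hot.) I would obtain both bounds from a single application of \cref{thm:boundell}.

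First I would establish the key claim: for every $2\times n$ snake position $G$ and every Left option $G^L$, Right, moving second, wins $G^L-G-2+\epsilon$ for a suitable infinitesimal $\epsilon$; by \cref{thm:boundell} this yields $\ell(G)\le 2$. Here a Left move is a vertical domino that vacates some full column $c$ and splits $G$ into two shorter snakes, so $G^L=A+B$ and the position to analyse is $A+B+(-G)-2+\epsilon$. Right's strategy is the copying (Tweedledum--Tweedledee) strategy that witnesses $G-G=0$, applied to the perturbed positive copy $A+B$: to any Left move Right replies with the corresponding move in the opposite copy. Because $A+B$ is exactly $G$ with the two cells of column $c$ deleted, every such reply is available except for the boundedly many moves that would require those two cells; the summand $-2+\epsilon$ is the reserve held to answer precisely these unmirrorable moves, with $\epsilon$ spent on the final tempo exactly as explained after \cref{thm:boundell} when Left drives a component to temperature $0$.

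Next I would observe that this criterion is automatically additive, which is what lets the single constant $2$ serve as both $K$ and $J$. If $A^L-A-2+\epsilon\le 0$ for every Left option of a snake $A$, then for a sum $A+B$ one has that $(A^L+B)-(A+B)-2+\epsilon$ equals $(A^L-A-2+\epsilon)+(B-B)$, a sum of two games each $\le 0$ and hence $\le 0$; the symmetric statement holds for moves in $B$. Since an option of a connected snake is exactly such a two-piece sum $A+B$, \cref{thm:boundell} applied to $A+B$ gives $\ell(A+B)\le 2$, that is, $\ell(G^L),\ell(G^R)\le 2$. With $\ell(G)\le 2$ and $\ell(G^L),\ell(G^R)\le 2$ both in hand, \cref{thm:BPbound1} with $K=J=2$ closes the argument.

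I expect the main obstacle to be the strategy verification in the second step: making the copying strategy precise at the split column $c$, showing that the total tempo Left can extract from the moves touching column $c$ is at most $2$ (equivalently, that Left's ``free'' vertical domino is worth at most $2$), and checking the terminal positions where the infinitesimal $\epsilon$ must be spent. The geometry of $2\times n$ snakes is what keeps this local and makes the constant $2$ attainable: a vertical domino occupies a full transition column and so cleanly severs the snake into $A+B$, while any horizontal domino meets only a bounded neighbourhood of column $c$, so only finitely many mirror responses can fail.
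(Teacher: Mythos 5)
Your skeleton coincides with the paper's proof: reduce to connected snakes via $t(G+H)\leq\max\{t(G),t(H)\}$, bound the confusion intervals through \cref{thm:boundell} by showing Right, moving second, wins $G^L-G-2$, handle the options $G^R=G_1+G_2$ by the cancellation $(G_1^L+G_2)-(G_1+G_2)=G_1^L-G_1$ (your ``additivity'' step is this same observation, phrased through sums), and finish with \cref{thm:BPbound1} with $K=J=2$. (The paper in fact takes $\epsilon=0$, which is an infinitesimal, so your $\epsilon$ is an unneeded safety margin rather than an error.) However, there is a genuine gap: you never prove the one statement carrying all the content, namely that a reserve of exactly $2$ suffices for the mirroring strategy. ``Boundedly many unmirrorable moves'' does not yield $\ell(G)\leq 2$; you need \emph{at most two} such moves in the whole play, and you need that answering them out of the reserve does not create fresh asymmetries that Left can exploit later. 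You explicitly defer this as ``the main obstacle,'' but it is precisely the body of the paper's proof, not a routine verification.

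Concretely, the paper argues as follows at the point you skip. With move 0 a vertical domino in column $k$, every Left move in $G^L$ is vertical at a full column other than $k$ and hence mirrorable, so the only unmirrorable Left moves are horizontal placements in $-G$ overlapping column $k$; the $2\times n$ snake geometry (no $2\times 2$ subgrid forces the cells adjacent to the full column $k$ to be single squares, one per side in opposite rows) leaves exactly two of these, $\{(a,k),(a,k+1)\}$ and $\{(b,k-1),(b,k)\}$. Right answers the first by moving in the component $-2$ and the second by moving in the resulting $-1$, and --- the key point --- once those squares are covered, no later Left move can overlap this neighbourhood again, so mirroring resumes for the remainder of the game and Right wins. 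This count is also exactly where the hypothesis ``fits within a $2\times n$ grid'' enters: the paper notes that for general snakes Left could overlap the neighbourhood of move 0 repeatedly, so the mirroring-with-reserve plan as you state it, without the counting and no-re-overlap argument, cannot distinguish the $2\times n$ case (where the bound $3$ holds) from wider snakes (where this strategy breaks down). Supplying that case analysis would complete your proposal and reproduce the paper's proof.
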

\begin{proof}
Let $G$ be \domineering played on a snake fitting within a $2\times n$ grid. We will show that $H^L-H-2$ is a Right win if Left goes first for $H=G,G^L,G^R$. By \cref{thm:boundell} we then have $\ell(H)\leq 2$. The result follows by \cref{thm:BPbound1}.

Note that when $H=G^L$ or $H=G^R$ the move already made results in each component breaking into a disjunctive sum at the same spot, two of which are the negative of each other, the other two have the difference of the 0th Left move. In the case of $H=G^R$, we then have
\[-G^R=-G_1-G_2\qquad\text{and}\qquad G^{RL}=G_1^L+G_2.\] 
This gives \[H^L-H=G^{RL}-G^R=G_1^L+G_2-G_1-G_2=G_1^L-G_1,\] so that we reduce the game to being played on a smaller board. The case $H=G^L$ is similarly a disjunctive sum which reduces to a smaller board.

Thus it is sufficient if we give a winning strategy for Right when $H=G$, which we will do next.

We will label the columns of the $2\times n$ grid containing the snake as $1,\ldots, n$ and use the convention that the move already made in $G^L$ is in column $k$ and called move 0. The rows in $G^L$ are labelled $1$ and $2$ and those in $-G$ are labelled $a$ and $b$. We will denote a move occupying the two squares $(x,y)$ and $(u,v)$ by $\{(x,y),(u,v)\}$. For example move 0 would be $\{(k,1),(k,2)\}$. Without loss of generality we will also assume that the square adjacent to move 0 on the left is in row 2 as below.

\begin{center}
\begin{tikzpicture}[scale=0.75]
\foreach \y in {0,-3}{
\draw (0,\y+1)--(11,\y+1);
\draw (0,\y)--(2,\y); \draw (4,\y)--(7,\y); \draw (9,\y)--(11,\y);
\draw (1,\y+2)--(5,\y+2); \draw (6,\y+2)--(10,\y+2);
\foreach \x in {0,1,2,4,5,6,7,9,10,11}{
	\draw (\x,\y)--(\x,\y+1);}
\foreach \x in {1,2,3,4,5,6,7,8,9,10}{
	\draw (\x,\y+1)--(\x,\y+2);}}
\Ldomino{6}{0}
\node at (-1,-2) {$-G$};
\node at (-1,1) {$G^L$};
\begin{scriptsize}
\node at (0.5,2.5) {$1$};
\node at (1.5,2.5) {$2$};
\node at (6.5,2.5) {$k$};
\node at (10.5,2.5) {$n$};
\node at (11.5,1.5) {$1$};
\node at (11.5,0.5) {$2$};
\node at (11.5,-1.5) {$a$};
\node at (11.5,-2.5) {$b$};
\end{scriptsize}
\end{tikzpicture}
\end{center}

If Left plays her first move in $G^L$ or in $-G$ without overlapping column $k$, then Right can mimic the move in the other game, resulting in a disjunctive sum and reducing the board to a smaller size. We may thus assume that Left's move is $(a\, k, a\, k+1)$ or in $(b\, k-1, b\, k)$. When Left makes this move, Right will respond by moving in $-2$, and then in $-1$ when Left makes the second of these moves. Note that due to the form of snakes we have chosen, Left cannot overlap either of these moves again, so that Right can continue a mimicking strategy until the end of the game and wins.
\end{proof}

Note that for \domineering snakes in general any move results in a disjunctive sum, but for the above strategy to work we do require the snake fits into a $2\times n$ grid. If this were not the case, then Left could potentially overlap a move overlapping move 0 again.

\subsection{\domineering on other Boards}

Berlekamp in 1988 \cite{Berlekamp1988} studied \domineering on $2\times n$ and $3\times n$ grids. For $2\times n$, he gives all values, correct up to infinitesimals, in terms of heating and overheating operators, which in some sense are inverses of cooling. In particular he shows that the values are periodic for sufficiently large $n$. From this, we get the exact temperatures for \domineering $2\times n$ as follows:

\begin{center}
\begin{tabular}{l| *5l}
$n$ & 1 & 2 & 3 & 4 & 5\\\hline
$t$ & 0 & 1 & 5/4 & 0 & 0
\end{tabular}
\end{center}
And for $n=10k+j$ with $k\in\mathbb{Z}_{\geq 0}$
\begin{center}
\begin{tabular}{l| *9l l}
$j$ & $6$ & $7$ & $8$ & $9$ & $10$ & $11$ & $12$ & $13$ & $14$ & $15$\\\hline
$t$ & 1 & 1 & 9/8 & 9/8 & 19/16 & 19/16 & 0 & 0 & 9/8 & 9/8 
\end{tabular}
\end{center}

\section{\Snort}\label{sec:Snort}
For \Snort, values of some positions are known, and thus their temperatures. If the board is the star $K_{1,n}$, then the value is $\{n\mid -n\}$ and thus the temperature is $n$. So, in general, the boiling point of \snort is infinite. However, there are no general results if the maximum degree of the graph is bounded. We will give an upper bound on the temperature when playing on a path and a conjecture on the boiling point based on the degree of the board.

\subsection{\snort on a Path}
Any position of \snort on a path is a disjunctive sum of paths with the only potential pieces at the ends. See for example the position below.

\begin{center}
\begin{tikzpicture}[scale=0.75]
\foreach \x in {0,1,2,3,4,5,6,7,8}{
	\draw (\x,0)--(\x,1);}
\foreach \y in {0,1}{
	\draw (0,\y)--(8,\y);}
\Ltoken{2}{0}
\Rtoken{6}{0}
\begin{scope}[shift={(0,-2)}]
\foreach \x in {0,1,2,3,4,5,6,7,8,9,10,11,12}{
	\draw (\x,0)--(\x,1);}
\foreach \y in {0,1}{
	\draw (0,\y)--(3,\y);
	\draw (4,\y)--(9,\y);
	\draw (10,\y)--(12,\y);}
\Ltoken{2}{0}
\Ltoken{4}{0}
\Rtoken{8}{0}
\Rtoken{10}{0}
\node at (-1,0.5) {$=$};
\node at (3.5,0.5) {$+$};
\node at (9.5,0.5) {$+$};
\end{scope}
\end{tikzpicture}
\end{center}

So to determine the boiling point of \snort played on a path it is sufficient to look at such positions.

Using CGSuite, we have the following temperatures for \snort on a path of $n$ vertices, potentially with pieces placed on one or both ends. Here $LP_kR$ indicates a path of length $k+2$ with a Left piece in the left-most space and a Right piece in the right-most space, and similarly for the other notations. Empty values are due to positions that are not possible.
\begin{center}
\begin{tabular}{r|cccccccccccc}
$n$&1&2&3&4&5&6&7&8&9&10&11&12\\\hline
$P_n$&0&1&2&3/2&1&0&1&2&2&3/2&3/2&1\\\hline
$LP_{n-1}$&$-1$&$-1$&1/2&3/2&2 & 7/4&3/2&1&15/8&2&2&31/16\\\hline
$LP_{n-2}L$& &$-1$&$-1$&$-1$&1&3/2&2&3/2 & 7/4 & 1 & 7/4 & 15/8 \\\hline
$LP_{n-2}R$& & &$-1$&0&1&2&2&2 &1& 1 & 1 & 2 
\end{tabular}
\end{center}

Winning Ways \cite{BerlekampCG2004} contains a list of values for various \snort positions. This includes values of $LP_nL$ and $LP_nR$ cooled by 1, which have temperatures at most 1, implying that the original games have temperature at most 2. In general, we conjecture that the boiling point for \snort on a path is 2, and prove below that it is at most 7.5.

\begin{theorem}\label{thm:SnortTemp}
The temperature of \snort played on a path is at most $\displaystyle 7\frac{1}{2}$.
\end{theorem}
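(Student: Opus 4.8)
The plan is to apply \cref{thm:BPbound1} to the class $S$ of all \snort positions whose board is a disjoint union of paths (carrying the usual end-decorations that record pieces adjacent to an endpoint). Any move places one piece on one path and splits that path into two shorter decorated paths, so $S$ is closed under options and we may take $J=K$ in \cref{thm:BPbound1}. Because every single path lies in $S$ and $t(G_1+G_2)\le\max\{t(G_1),t(G_2)\}$, it is enough to show $\ell(G)\le 5$ for every $G\in S$; \cref{thm:BPbound1} then gives $BP(S)\le\tfrac{5}{2}+5=7\tfrac12$, bounding the temperature of any path. When $G$ is not hot we have $\ell(G)=0$, so by \cref{thm:boundell} the theorem reduces to a single claim: for every $G\in S$ and every Left option $G^L$, Right wins $G^L-G-5+\epsilon$ moving second, for a suitable infinitesimal $\epsilon$.

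The first step is to discard the inert part of the difference. A Left option $G^L$ comes from Left playing in a single path-component $P$ of $G$, producing $P^L$ while the other components $R$ are untouched; writing $G=P+R$ gives the value identity $G^L-G=(P^L+R)-(P+R)=P^L-P$. Hence it suffices to treat one connected decorated path: if Left's move at a vertex $v$ splits $P$ into $P^L=P_1+P_2$, I must show $P_1+P_2-P-5+\epsilon\le 0$.

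For this I would have Right play a copying (Tweedledum--Tweedledee) strategy against the mirror component $-P$, padded by the five spare moves that $-5$ supplies to Right. Identify the vertices of $P_1+P_2$ with those of $-P$ through the original labelling of $P$. Had Left not made move $0$, the position would be the second-player win $P-P$, where Right answers each Left move by the mirror move in the opposite component; the sole asymmetry is Left's piece at $v$, which in \snort also forbids Right (but not Left) from the neighbours $v\pm 1$. Right therefore copies wherever this is legal, and whenever the asymmetry near $v$ makes the mirror of Left's move illegal, Right instead advances the integer $-5$ by one (a move available only to Right), spending one buffer token; the infinitesimal $\epsilon$ is kept in reserve so that Right retains the last move in any tepid follower.

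The crux, and the hardest step, is to verify that five tokens always suffice, that is, that copying can break down at most five times. All defects are confined to the vertices near $v$ (essentially $v-2,\dots,v+2$ together with the interaction of $v\pm 1$ with an endpoint), and each can be charged to at most one extra Left tempo. Bounding the total by $5$ calls for a finite case analysis over whether $v$ is interior or next to an end, and over the boundary types $P_n$, $LP_{n-1}$, $LP_{n-2}L$, $LP_{n-2}R$ and their mirrors, checking in each case that after Right spends a token the remainder is again a balanced pairing of decorated subpaths, so that copying resumes, and that the \snort adjacency rule never lets a single defect open two independent Left threats. This bookkeeping is routine rather than conceptually hard, but it is exactly what pins the constant at $5$ — and hence the bound at $7\tfrac12$ — and it is where one would want to look to sharpen $5$ to $4+\uparrow$ as suggested after \cref{thm:boundell}.
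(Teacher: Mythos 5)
Your reduction is sound and matches the paper's: splitting off untouched path components via $G^L-G=P^L-P$, invoking \cref{thm:boundell} with $K=5$, and feeding $J=K=5$ into \cref{thm:BPbound1} to get $\tfrac{5}{2}+5=7\tfrac12$ is exactly how the paper proceeds (it additionally disposes of short paths, $n\le 4$, by the computed temperature table, since its strategy needs $n\ge 5$). The gap is at what you yourself call the crux, and it is not routine bookkeeping: the purely \emph{reactive} strategy you describe --- mirror when legal, otherwise spend one of the five integer tokens --- loses on long paths, because your claim that all defects stay confined to $v-2,\dots,v+2$ is false. Left can make the asymmetric region spread. With move $0$ at vertex $k$ of $P^L$, consider the sequence of Left moves $k'$, $k+1'$, $k-1'$ (legal, since a player may sit next to their own pieces), then $k+2$ in $P^L$ (its mirror $k+2'$ is now adjacent to Left's piece at $k+1'$, hence illegal for Right), then $k-2$ in $P^L$, then $k+3'$ (mirror $k+3$ blocked by Left's piece at $k+2$), then $k+4$ (mirror $k+4'$ blocked by $k+3'$), and so on: every move in this cascade is legal for Left while its mirror is blocked by the Left piece placed on the previous step. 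Right is forced to spend a token on each, the five tokens are exhausted, and Left has meanwhile reserved a large surplus of Left-only vertices around the defect zone, so Right loses. No fixed number of tokens suffices against this attack under your strategy.

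What pins the constant at $5$ in the paper is that Right plays \emph{proactively} to seal the influence area before resuming the mirror: after Left's first non-copiable move ($k'$ in Case 1), Right immediately plays a capping move at distance two, namely $k+2$ or $k+2'$, and if Left takes the other one Right caps the left side with $k-2$ or $k-2'$; only when Left matches that as well does Right touch the integer. These caps place pieces so that outside the interval $[k-2,k+2]$ the two components are again exact negatives of one another and copying resumes there; the only remaining non-copiable Left moves are the finitely many cells inside the sealed zone, and counting them gives at most four integer responses in Case 1 and five in Case 2 (the case where Left starts with $k+1'$, answered by $k-1'$ and then $k+2$). So your proposal has the correct skeleton but is missing the one idea --- the boundary-capping moves that stop the defect zone from propagating --- that makes the token count finite; the case analysis you deferred is precisely the content of the paper's proof, and as stated your strategy would need to be repaired, not merely checked.
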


\begin{proof}
The temperatures for paths of length up to $14$ are above, showing in particular that the theorem holds up to length 4. For paths of length at least 5 we will show that Right wins $H^L-H-5$ with Left going first, giving $\ell(H)\leq 5$.

We label the vertices in $H^L$ as $1,\ldots, n$, with move 0 in vertex $k$, and the vertices in $-H$ as $1',\ldots, n'$.
\begin{center}
\begin{tikzpicture}[scale=0.75]
	\foreach \x in {-5,-4,-3,-1,0,1,2,3,5,6,7}{
	\draw (\x,0)--(\x,1);
	\draw (\x,2)--(\x,3);}
\foreach \y in {0,1,2,3}{
	\draw (-5,\y)--(7,\y);}
\draw[dash pattern=on 2pt off 2pt] (-2.5,0.5)--(-1.5,0.5);
\draw[dash pattern=on 2pt off 2pt] (3.5,0.5)--(4.5,0.5);
\draw[dash pattern=on 2pt off 2pt] (-2.5,2.5)--(-1.5,2.5);
\draw[dash pattern=on 2pt off 2pt] (3.5,2.5)--(4.5,2.5);
\Ltoken{0}{2}
\node at (-6,2.5) {$H^L$};
\node at (-6, 0.5) {$-H$};
\begin{scriptsize}
\node at (-4.5,3.5) {$1$};
\node at (-3.5,3.5) {$2$};
\node at (-0.5,3.5) {$k-1$};
\node at (0.5,3.5) {$k$};
\node at (1.5,3.5) {$k+1$};
\node at (2.5,3.5) {$k+2$};
\node at (5.5,3.5) {$n-1$};
\node at (6.5,3.5) {$n$};
\node at (-4.5,-0.5) {$1'$};
\node at (-3.5,-0.5) {$2'$};
\node at (-0.5,-0.5) {$k-1'$};
\node at (0.5,-0.5) {$k'$};
\node at (1.5,-0.5) {$k+1'$};
\node at (2.5,-0.5) {$k+2'$};
\node at (5.5,-0.5) {$n-1'$};
\node at (6.5,-0.5) {$n'$};
\end{scriptsize}
\end{tikzpicture}
\end{center}

Whenever possible, Right copies Left's move in the other component, resulting in a disjunctive sum. By induction we can thus assume the boards are empty except for move 0 and possibly at the ends of the path (which would be identical), until Left makes a move which Right cannot copy. The only moves by Left that cannot be copied by Right are $k-1'$, $k'$, and $k+1'$. Our strategy for Right is to essentially bound the area around move 0 in which Right cannot copy Left's moves, which we call the influence area.

\textit{Case 1: Left moves in $k'$}

(Note, if at any point $k-1$ or $k+1$ had already been played, Left cannot make this move.)

Right responds by playing $k+2$ or $k+2'$ if possible. If Left responds with the other move, Right then moves in $k-2$ or $k-2'$. If Left copies this move again, Right responds by playing in the integer.

\begin{center}
\begin{tikzpicture}[scale=0.75]
	\foreach \x in {-1,0,1,2,3}{
	\draw (\x,0)--(\x,1);
	\draw (\x,2)--(\x,3);}
\foreach \y in {0,1,2,3}{
	\draw (-2,\y)--(4,\y);}
\draw[dash pattern=on 2pt off 2pt] (-2.5,0.5)--(-1.5,0.5);
\draw[dash pattern=on 2pt off 2pt] (3.5,0.5)--(4.5,0.5);
\draw[dash pattern=on 2pt off 2pt] (-2.5,2.5)--(-1.5,2.5);
\draw[dash pattern=on 2pt off 2pt] (3.5,2.5)--(4.5,2.5);
\Ltoken{0}{2}
\Rtoken{0}{0}
\Rtoken{2}{0}
\Rtoken{2}{2}
\begin{scriptsize}
\node at (0.5,3.5) {$k$};
\node at (0.5,-0.5) {$k'$};
\end{scriptsize}
\node at (5,1.5) {or};

\begin{scope}[shift={(8,0)}]
	\foreach \x in {-1,0,1,2,3}{
	\draw (\x,0)--(\x,1);
	\draw (\x,2)--(\x,3);}
\foreach \y in {0,1,2,3}{
	\draw (-2,\y)--(4,\y);}
\draw[dash pattern=on 2pt off 2pt] (-2.5,0.5)--(-1.5,0.5);
\draw[dash pattern=on 2pt off 2pt] (3.5,0.5)--(4.5,0.5);
\draw[dash pattern=on 2pt off 2pt] (-2.5,2.5)--(-1.5,2.5);
\draw[dash pattern=on 2pt off 2pt] (3.5,2.5)--(4.5,2.5);
\Ltoken{0}{2}
\Rtoken{0}{0}
\Ltoken{2}{0}
\Ltoken{2}{2}
\begin{scriptsize}
\node at (0.5,3.5) {$k$};
\node at (0.5,-0.5) {$k'$};
\end{scriptsize}
\end{scope}
\end{tikzpicture}

Response to the right of move 0 (symmetric to the left)
\end{center}

The only time these responses are not possible for Right is if move 0 is too close to either end, which is only possible on one end since we assume $n\geq 5$. The end then looks like either of the two cases below.

\begin{center}
\begin{tikzpicture}[scale=0.75]
	\foreach \x in {-1,0,1,2}{
	\draw (\x,0)--(\x,1);
	\draw (\x,2)--(\x,3);}
\foreach \y in {0,1,2,3}{
	\draw (-2,\y)--(2,\y);}
\draw[dash pattern=on 2pt off 2pt] (-2.5,0.5)--(-1.5,0.5);
\draw[dash pattern=on 2pt off 2pt] (-2.5,2.5)--(-1.5,2.5);
\Ltoken{0}{2}
\Rtoken{0}{0}
\begin{scriptsize}
\node at (0.5,3.5) {$k$};
\node at (0.5,-0.5) {$k'$};
\end{scriptsize}
\node at (3,1.5) {or};

\begin{scope}[shift={(5,0)}]
	\foreach \x in {-1,0,1}{
	\draw (\x,0)--(\x,1);
	\draw (\x,2)--(\x,3);}
\foreach \y in {0,1,2,3}{
	\draw (-2,\y)--(1,\y);}
\draw[dash pattern=on 2pt off 2pt] (-2.5,0.5)--(-1.5,0.5);
\draw[dash pattern=on 2pt off 2pt] (-2.5,2.5)--(-1.5,2.5);
\Ltoken{0}{2}
\Rtoken{0}{0}
\begin{scriptsize}
\node at (0.5,3.5) {$k$};
\node at (0.5,-0.5) {$k'$};
\end{scriptsize}
\end{scope}
\end{tikzpicture}
\end{center}

Left then has at most three additional non-copiable moves (since $n\geq 5$ move 0 can only be 2 away from at most one end) to which Right responds in the integer. Since Right has to respond in the integer at most four times, he wins.

\textit{Case 2: Left moves in $k+1'$ (wlog)}

If $k=1$ or $R$ has previously been played in $k-2$ and $k-2'$, the influence area is already bounded to the left of move 0. Otherwise Right moves in $k-1'$.

\begin{center}
\begin{tikzpicture}[scale=0.75]
	\foreach \x in {-1,0,1,2,3}{
	\draw (\x,0)--(\x,1);
	\draw (\x,2)--(\x,3);}
\foreach \y in {0,1,2,3}{
	\draw (-2,\y)--(4,\y);}
\draw[dash pattern=on 2pt off 2pt] (-2.5,0.5)--(-1.5,0.5);
\draw[dash pattern=on 2pt off 2pt] (3.5,0.5)--(4.5,0.5);
\draw[dash pattern=on 2pt off 2pt] (-2.5,2.5)--(-1.5,2.5);
\draw[dash pattern=on 2pt off 2pt] (3.5,2.5)--(4.5,2.5);
\Ltoken{0}{2}
\Ltoken{0}{0}
\Rtoken{2}{0}
\Ltoken{1}{2}
\begin{scriptsize}
\node at (1.5,3.5) {$k$};
\node at (2.5,-0.5) {$k+1'$};
\end{scriptsize}
\node at (5,1.5) {or};

\begin{scope}[shift={(8,0)}]
	\foreach \x in {-1,0,1,2,3}{
	\draw (\x,0)--(\x,1);
	\draw (\x,2)--(\x,3);}
\foreach \y in {0,1,2,3}{
	\draw (-2,\y)--(4,\y);}
\draw[dash pattern=on 2pt off 2pt] (-2.5,0.5)--(-1.5,0.5);
\draw[dash pattern=on 2pt off 2pt] (3.5,0.5)--(4.5,0.5);
\draw[dash pattern=on 2pt off 2pt] (-2.5,2.5)--(-1.5,2.5);
\draw[dash pattern=on 2pt off 2pt] (3.5,2.5)--(4.5,2.5);
\Ltoken{1}{2}
\Rtoken{-1}{0}
\Rtoken{2}{0}
\Rtoken{-1}{2}
\begin{scriptsize}
\node at (1.5,3.5) {$k$};
\node at (2.5,-0.5) {$k+1'$};
\end{scriptsize}
\end{scope}
\end{tikzpicture}

To the left of move 0
\end{center}

When Left moves in $k-1$ or the influence area was already bounded to the left, Right responds by playing $k+2$, unless $k=n-2$ or $L$ has been played in $k+3$ and $k+3'$. When Left copies and moves in $k-1'$, or the influence area was bounded to the right, Right moves in the integer.

\begin{center}
\begin{tikzpicture}[scale=0.75]
	\foreach \x in {-1,0,1,2,3}{
	\draw (\x,0)--(\x,1);
	\draw (\x,2)--(\x,3);}
\foreach \y in {0,1,2,3}{
	\draw (-2,\y)--(4,\y);}
\draw[dash pattern=on 2pt off 2pt] (-2.5,0.5)--(-1.5,0.5);
\draw[dash pattern=on 2pt off 2pt] (3.5,0.5)--(4.5,0.5);
\draw[dash pattern=on 2pt off 2pt] (-2.5,2.5)--(-1.5,2.5);
\draw[dash pattern=on 2pt off 2pt] (3.5,2.5)--(4.5,2.5);
\Ltoken{-1}{2}
\Rtoken{0}{0}
\Rtoken{1}{0}
\Rtoken{1}{2}
\begin{scriptsize}
\node at (-0.5,3.5) {$k$};
\node at (0.5,-0.5) {$k+1'$};
\end{scriptsize}
\node at (5,1.5) {or};

\begin{scope}[shift={(8,0)}]
	\foreach \x in {-1,0,1,2,3}{
	\draw (\x,0)--(\x,1);
	\draw (\x,2)--(\x,3);}
\foreach \y in {0,1,2,3}{
	\draw (-2,\y)--(4,\y);}
\draw[dash pattern=on 2pt off 2pt] (-2.5,0.5)--(-1.5,0.5);
\draw[dash pattern=on 2pt off 2pt] (3.5,0.5)--(4.5,0.5);
\draw[dash pattern=on 2pt off 2pt] (-2.5,2.5)--(-1.5,2.5);
\draw[dash pattern=on 2pt off 2pt] (3.5,2.5)--(4.5,2.5);
\Ltoken{-1}{2}
\Rtoken{0}{0}
\Ltoken{2}{0}
\Ltoken{2}{2}
\begin{scriptsize}
\node at (-0.5,3.5) {$k$};
\node at (0.5,-0.5) {$k+1'$};
\end{scriptsize}
\end{scope}
\end{tikzpicture}

To the right of move 0
\end{center}

Left now has at most four additional non-copiable moves to which Right responds by playing in the integer. Since Right has to respond in the integer at most five times, he wins.
\end{proof}

%
%

\subsection{\snort on other Boards}
For \snort on a $2\times n$ empty grid, the following temperatures were found using CGSuite.

\begin{center}
\begin{tabular}{r|cccccc}
$n$&2&3&4&5&6&7\\\hline
$t$&$-1$&9/4&$-1$&5/2&$-1$&1
\end{tabular}
\end{center}

More generally, we make the following conjecture:
\begin{conjecture}\label{conj:SnortTemp}
The temperature of \snort on a board $B$ is at most the degree of $B$.
\end{conjecture}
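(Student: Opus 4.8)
The plan is to reduce \cref{conj:SnortTemp} to a local statement about how far a single \snort move can shift the \emph{mean}, and then to attack that statement by a degree-sensitive strategy argument in the spirit of \cref{thm:boundell}. Since $t(G+H)\le\max\{t(G),t(H)\}$ we may assume the board $B$ is connected, and we induct on the number of empty vertices: every option of a \snort position is again \snort played on a graph of maximum degree at most $\deg(B)=d$, so the inductive hypothesis $t(G^L),t(G^R)\le d$ is available.

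The reduction uses a thermic version $\widetilde{G}=\{G^L\mid G^R\}$ (\cref{def:Thermic}), for which $t(G)=t(\widetilde{G})$ and there is a single option on each side. For $t\le t(\widetilde{G})$ the left wall of $\widetilde{G}$ equals $RS(G^L_t)-t$, and since cooling only raises a right stop toward its mean we have $RS(G^L_t)\le m(G^L)$. Evaluating at $t=t(\widetilde{G})$, where the left wall reaches $m(\widetilde{G})$, gives $m(\widetilde{G})\le m(G^L)-t(\widetilde{G})$, i.e.\ $t(G)\le m(G^L)-m(\widetilde{G})$; symmetrically $t(G)\le m(\widetilde{G})-m(G^R)$. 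Adding the two,
\[
t(G)\;\le\;\tfrac12\bigl(m(G^L)-m(G^R)\bigr).
\]
Thus it suffices to prove that for the thermic options — and it is enough to prove it for \emph{all} options — the mean swing of a single move satisfies $m(G^L)-m(G^R)\le 2d$. The star $K_{1,n}$ shows this would be exactly tight: Left (resp.\ Right) playing the centre yields an option of mean $+n=+d$ (resp.\ $-d$), so $m(G^L)-m(G^R)=2d$ and the inequality is met with equality, reproducing $t=d$.

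The core claim I would isolate is the per-move bound $m(G)-d\le m(G^R)$ together with $m(G^L)\le m(G)+d$: a single placement raises or lowers the mean by at most the degree of the chosen vertex. Heuristically a Left move on a vertex $v$ claims $v$ and merely forbids Right the at most $d$ neighbours of $v$, so at most $1+d$ vertices change status, and the sharper constant $d$ should come from the claimed vertex $v$ being removed from contention for both players. I would try to make this precise by a scoring/strategy argument, comparing $G^L$ with $G$ in a difference game and exhibiting for the trailing player a response that concedes at most $d$ net moves, exactly as Right concedes a bounded number of ``integer'' moves in the proofs of \cref{thm:BODomSnakes} and \cref{thm:SnortTemp}.

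The main obstacle is that means, unlike stops, are not directly accessible to the passing-move strategies behind \cref{thm:boundell}: the quantity $m(G^L)-m(G^R)$ is not the stop of any difference game, and the mean of a position with hot followers can move by more than the naive count of affected vertices when the poisoned neighbourhood of $v$ interacts with hot structure elsewhere on the board. On a path (\cref{thm:SnortTemp}) the neighbourhood is an interval of size two and the interactions are tamed by the one-dimensional ``influence area,'' which is precisely why even the crude confusion-interval route through \cref{thm:BPbound1} already yields a finite bound there; on a general graph of degree $d$ a single move can touch up to $d$ essentially independent hot regions simultaneously, so bounding the resulting mean shift by $d$ — rather than by the weaker $\tfrac32 d$-type estimate that \cref{thm:BPbound1} with $K=J$ produces, and rather than merely bounding $\ell(G)$ — is the crux that keeps \cref{conj:SnortTemp} open.
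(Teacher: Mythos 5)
You should know at the outset that the paper does not prove this statement: it appears only as \cref{conj:SnortTemp} and is left open, supported by CGSuite data, the path bound of $7\frac{1}{2}$ in \cref{thm:SnortTemp}, and the universal-vertex proposition showing the bound would be tight. So the only question is whether your argument closes it, and, as you yourself concede in your last paragraph, it does not. Your reduction, however, is correct and is worth keeping. For hot $G$ with thermic version $\widetilde{G}$, one has $LS(\widetilde{G}_t)=RS(\widetilde{G}^L_t)-t$ for $t<t(\widetilde{G})$, right walls of thermographs are non-decreasing in $t$, so $RS(\widetilde{G}^L_t)\le m(\widetilde{G}^L)$, and letting $t\to t(\widetilde{G})$ gives $t(G)\le m(\widetilde{G}^L)-m(\widetilde{G})$; with the symmetric inequality this yields $t(G)\le\frac{1}{2}(m(\widetilde{G}^L)-m(\widetilde{G}^R))$, the term $m(\widetilde{G})$ cancelling so that you never need to compare $m(\widetilde{G})$ with $m(G)$. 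You can even tie your crux claim to the paper's machinery more tightly than you do: the mean is additive and satisfies $RS(H)\le m(H)\le LS(H)$, so $m(G^L)-m(G)=m(G^L-G)\le LS(G^L-G)$, and hence your per-move mean bound would follow from exactly the hypothesis of \cref{thm:boundell} with $K=d$, namely Right winning $G^L-G-d+\epsilon$ going second (the Right-option half follows by the colour symmetry of \snort). This makes your reduction a genuine improvement on the paper's pipeline: the same input $LS(G^L-G)\le K$ that produces $BP\le \frac{3K}{2}$ through \cref{thm:BPbound1} produces $t(G)\le K$ through means, and already with the paper's own $K=5$ for paths this would sharpen \cref{thm:SnortTemp} from $7\frac{1}{2}$ to $5$.

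The gap is therefore concentrated precisely where you place it, and it is genuine: nothing in the paper, and nothing in your proposal, proves $LS(G^L-G)\le d$ or any other bound of the form $m(G^L)-m(G)\le d$. The mirroring strategies of \cref{thm:BODomSnakes} and \cref{thm:SnortTemp} concede a number of free ``integer'' moves governed by the size of the influence area around move $0$, and on a path that area already forces the constant $5$ rather than the degree $2$; on a graph of maximum degree $d$ the non-copiable neighbourhood grows with $d$ and no known pairing argument caps the concession at $d$ itself. Your heuristic that only $1+\deg(v)$ vertices ``change status'' is not a proof for exactly the reason you identify: the tinted neighbourhood of $v$ can interact with hot structure elsewhere, and the mean, unlike a stop, is not certified by the outcome of any single difference game, so \cref{thm:boundell}-style strategy arguments do not apply to it directly (your route via $m(G^L-G)\le LS(G^L-G)$ is the correct repair, but it returns you to the unproven $K=d$ strategy statement). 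Two smaller defects: the inductive hypothesis $t(G^L),t(G^R)\le d$ you set up is idle, since temperature is not controlled by option temperatures (the switch $\{d\mid -d\}$ has options of temperature $-1$) and your argument never actually invokes it; and your closing comparison with a ``$\frac{3}{2}d$-type'' bound presupposes $\ell(G)\le d$ for all \snort positions, which is likewise unproven in the paper.
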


Intuitively, this conjectures comes from the degree of $B$ being the maximum number of spaces one can `reserve' for themselves with a single move. There are cases in which the temperature is equal to the degree of the board. Here we need the idea of a \textit{universal vertex}, which is a vertex adjacent to all other vertices in the graph, \ie one for which the degree is $|V|-1$.
\begin{proposition}
Suppose $B$ is a graph with a universal vertex and $|V|=n+1$. For $G=(\Snort, B)$ we have $G=\pm n$, thus $t(G)=n$.
\end{proposition}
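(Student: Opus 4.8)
The plan is to compute the value of $G$ directly by isolating its dominant options, the universal vertex $u$ playing the special role. I claim that Left's move to $u$ has value exactly $n$ and dominates every other Left option, and symmetrically that Right's move to $u$ has value $-n$ and dominates every other Right option. Once this is in place, the standard reduction to canonical form by deletion of dominated options (see \cite{Siegel2013}) collapses $G$ to $\{n\mid -n\}=\pm n$. Since $\{n\mid -n\}$ has Left stop $n$, Right stop $-n$, and walls of slope $\pm 1$, its mast begins at height $(n-(-n))/2=n$, giving $t(G)=n$.

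First I would verify the value of the option $G^u$, the position in which Left has placed a single piece on $u$ and all other vertices are empty. Because $u$ is universal, every remaining vertex is adjacent to Left's piece, so Right has no legal move; meanwhile Left may fill the remaining vertices one at a time, each such move again leaving Right with no reply. Writing $P_k$ for the position with $k$ of the non-$u$ vertices also occupied by Left, an easy downward induction gives $P_n=\{\emptyset\mid\emptyset\}=0$ and $P_k=\{P_{k+1}\mid\emptyset\}=n-k$, so $G^u=P_0=n$. The symmetric argument with colours reversed shows that the Right option obtained by playing $u$ equals $-n$.

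The main obstacle is the domination claim: that every Left option $G^v$ satisfies $G^v\le n$ (and dually every Right option is at least $-n$). I would establish this through the difference game $G^v-n$; by \cref{thm:DifferenceOutcomes} it is enough to give Right a winning strategy when Left moves first. The point is that in $G^v$ exactly $n$ vertices remain empty, while the integer component $-n=\{\emptyset\mid -n+1\}$ offers Right exactly $n$ moves and Left none. Right simply answers every move of Left by decrementing the integer. Since the integer gives Left no move, each of Left's moves is a \snort move occupying one of the $n$ empty vertices; when Left makes her $k$th move we have $k\le n$, so Right, having answered $k-1$ times, still has an integer move available. As Right never places a \snort piece, no vertex is ever blocked for Left, so Left must keep playing until all $n$ vertices are filled, after which it is Left's turn with no move in either component. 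Thus Right makes the last move and wins, giving $G^v-n\le 0$, that is $G^v\le n=G^u$.

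With both domination claims in hand, deleting the dominated options reduces $G$ to $\{n\mid -n\}=\pm n$; this form is already canonical, since the option $n$ has no Right option and $-n$ no Left option, so neither is reversible. Therefore $t(G)=t(\pm n)=n$. I expect the move-matching argument of the previous paragraph to be the only delicate point, the remaining steps being routine once the extreme options are pinned down. Note also that the edges of $B$ other than those incident to $u$ never enter the argument, so the conclusion is genuinely general for graphs possessing a universal vertex.
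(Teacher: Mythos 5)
Your proposal is correct and follows essentially the same route as the paper: identify the move on the universal vertex as reserving all $n$ remaining vertices (value $\pm n$), argue that every other option is dominated, and conclude $G=\{n\mid -n\}=\pm n$ with $t(G)=n$. The only difference is one of detail, not of method: the paper simply asserts the domination claim, whereas you substantiate it with the difference-game $G^v-n$ and the move-counting strategy for Right, which is a sound way to fill in that omitted step.
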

\begin{proof}
Let $v$ be a universal vertex of $B$. The good move for either player is to play on $v$, thus reserving all other vertices for themselves. All other possible moves will be dominated by this move. Thus $G=\{n\mid -n\}$.
\end{proof}


\section{Further work}





In \cref{thm:BPbound1} and all our applications we have bounded the confusion interval for all options. To improve these bounds, we will look at what the thermic version specifically is without having to build the whole thermograph, thus only having to bound the length of the confusion interval for the thermic options. Game professionals are good at this---identifying the most important options and discarding the others without going through a full analysis. This will be hard in general but are there hot games for which this is possible? 

Further, all bounds on the length of the vertical and oblique segments in the proof of \cref{thm:TempBound} are tight in certain cases, but are often much larger than the actual length of these segments. When restricting to specific classes of games it should be possible to improve these bounds and therefore the bound on the temperature itself.

\bibliographystyle{plain}
\bibliography{Biblio2019July}

\end{document}